\numberwithin{equation}{section}
\theoremstyle{plain}
\newtheorem{thm}{Theorem}[subsection]
\newtheorem{prop}[thm]{Proposition}
\newtheorem{lem}[thm]{Lemma}
\newtheorem{cor}[thm]{Corollary}
\theoremstyle{definition}
\newtheorem{defn}[thm]{Definition}
\newtheorem{rmk}[thm]{Remark}
\newcommand{\mathbbm}[1]{\mathbf{#1}}
\newcommand{\F}{\mathbb{F}}
\newcommand{\Fhat}{\widehat{F}}
\newcommand{\Ga}{\mathbb{G}_a}
\newcommand{\kappawpbar}{\overline{\kappa(\wp)}}
\newcommand{\wpbar}{\overline{\wp}}
\newcommand{\Z}{\mathbb{Z}}
\newcommand{\cC}{\mathcal{C}}
\newcommand{\cK}{\mathcal{K}}
\newcommand{\cM}{\mathcal{M}}
\newcommand{\cO}{\mathcal{O}}
\newcommand{\cS}{\mathcal{S}}
\newcommand{\Awphatur}{\widehat{A}_\wp^{\mathrm{ur}}}
\newcommand{\Aut}{\mathrm{Aut}}
\newcommand{\CH}{\mathrm{CH}}
\newcommand{\End}{\mathrm{End}}
\newcommand{\GL}{\mathrm{GL}}
\newcommand{\Hom}{\mathrm{Hom}}
\newcommand{\id}{\mathrm{id}}
\newcommand{\Image}{\mathrm{Image}}
\newcommand{\Ker}{\mathrm{Ker}}
\newcommand{\op}{\mathrm{op}}
\newcommand{\Spec}{\mathrm{Spec}\,}
\newcommand{\Spf}{\mathrm{Spf}\,}
\newcommand{\Supp}{\mathrm{Supp}\,}
\newcommand{\frm}{\mathfrak{m}}
\newcommand{\inj}{\hookrightarrow}
\newcommand{\surj}{\twoheadrightarrow}
\newcommand{\resp}{resp.\ }
\newcommand{\xto}[1]{\xrightarrow{#1}}
\newcommand{\wt}[1]{\widetilde{#1}}
\newcommand{\wh}[1]{\widehat{#1}}
\newcommand{\advertisement}[1]{}
\newcommand{\bd}{{\mathbbm{d}}}
\renewcommand{\mod}{\,\mathrm{mod}\,}
\newcommand{\fram}{\mathfrak{m}}
\newcommand{\bF}{\mathbf{F}}
\newcommand{\Sym}{\mathrm{Sym}}
\title[Regularity of quotients]{Regularity of quotients of
Drinfeld modular schemes}
\author[Satoshi KONDO, Seidai YASUDA]
       {Satoshi KONDO$^{1,2}$, Seidai YASUDA$^3$       \\
       $^1${Middle East Technical University, Northern Cyprus Campus, Turkey}
\\
       $^2$Kavli Institute of the Physics and Mathematcis of the Universe, University of Tokyo (WPI), Japan
\\
       $^3$Department of Mathematics, Graduate School of Science, Osaka University, Japan
}
\email{satoshi.kondo@gmail.com}
\address{Corresponding author: 
Satoshi Kondo \\
Middle East Technical University \\
Northern Cyprus Campus, TZ-32, Kalkanli \\
Guzelyurt, Mersin 10, Turkey
\\
Email: satoshi.kondo@gmail.com\\
Kavli Institute for the Physics and Mathematics of the Universe\\
University of Tokyo\\
5-1-5 Kashiwanoha\\
Kashiwa 277-8583\\ Japan\\  
Tel: +81-4-7136-4940\\
Fax: +81-4-7136-4941\\
}
\address{Seidai Yasuda\\
Department of Mathematics\\
Graduate School of Science\\
Osaka University\\
1-1 Machikaneyama Toyonaka Osaka 
560-0043 JAPAN\\
Email:s-yasuda@math.sci.osaka-u.ac.jp}
\date{\today}
\keywords{Drinfeld modular schemes; Drinfeld level structures; Regularity}
\subjclass[2010]{11F32, 14G35}
\begin{document}

\begin{abstract}
Let $A$ be the coordinate ring of 
a projective smooth curve over a finite field minus a closed point.
For a nontrivial ideal $I \subset A$, 
Drinfeld defined the notion of structure of level $I$ on
a Drinfeld module.

We extend this to that of level $N$, where
$N$ is a finitely generated torsion $A$-module.
The case where $N=(I^{-1}/A)^d$,
where $d$ is the rank of the Drinfeld module,
coincides with the structure of level $I$.
The moduli functor is representable by a regular affine 
scheme.

The automorphism group $\Aut_{A}(N)$ acts on the moduli space.
Our theorem gives a class of subgroups for which the quotient
of the moduli scheme is regular.   Examples include generalizations
of $\Gamma_0$ and of $\Gamma_1$.   
We also show that parabolic subgroups appearing in the 
definition of Hecke correspondences are such subgroups.
\end{abstract}

\maketitle
\section{Introduction}
\subsection{Main theorem and applications}
We first recall the usual setup for Drinfeld modules.
Let $C$ be a smooth projective geometrically irreducible curve over the finite field
$\F_q$ of $q$ elements.
Let $F$ denote the function field of $C$.
Fix a closed point $\infty$ of $C$.
Let $A=\Gamma(C \setminus\{\infty\}, \cO_C)$
be the coordinate ring of the affine $\F_q$-scheme
$C\setminus \{\infty\}$.

In this article, we define structure of level $N$ on
a Drinfeld module, generalizing the structure of level $I$ 
of Drinfeld (also known as the full level $I$ structure).
This may also be regarded as a generalization
of the $\Gamma_1$-structure.

Let us denote by $\cM_N^d=\cM_{N,A}^d$
the functor that associates an $A$-scheme 
$S$ to the set of isomorphism classes
of Drinfeld modules over $S$ 
with structure of level $N$.
(We will also mention $\cM_{N, U}^d$ 
for an open subscheme $U$ of $\Spec A$.)
The representability by an affine scheme
and its regularity of the moduli functor
when $|\Supp N| \ge 2$
can be proved in a manner similar to that of the
full level case.   See Proposition~\ref{prop:level N moduli}.

Note that the automorphism group
$\Aut_A(N)$ of $N$ as an $A$-module
acts on $N$, hence on the set of 
level structures,
and thereby on the moduli space.
We define admissible subgroups of 
$\Aut_A(N)$
and denote by $\cS_N$ the set of 
admissible subgroups.
Our main theorem in a rough form is as follows.
See Theorem~\ref{thm:main} 
for more general statement.
\begin{thm}
\label{thm:intro}
Let $N$ be a torsion $A$-module,
generated by at most $d$ elements.
Let $N=N_1 \oplus N_2$ be 
a direct sum decomposition such that $N_1 \neq 0$,
 $N_2 \neq 0$, and 
$\Supp N_1 \cap \Supp N_2=\emptyset$.
Let $H \in \cS_{N_1}$ be
an admissible subgroup,
regarded as a subgroup of $\Aut_A(N)$.
Then, the quotient
$\cM_N^d/H$
is regular. 
\end{thm}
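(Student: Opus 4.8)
The plan is to check regularity locally, after completion, and to reduce it to a statement about the ring of invariants of a stabilizer subgroup. Since $N_1$ is a finite $A$-module, $\Aut_A(N_1)$ is finite, and $H$ --- regarded inside $\Aut_A(N_1)\times\Aut_A(N_2)=\Aut_A(N)$ via the decomposition, acting trivially on $N_2$ --- is a finite group; hence $\cM_N^d/H$ is an affine scheme of finite type over $A$. Moreover $|\Supp N|\ge |\Supp N_1|+|\Supp N_2|\ge 2$, so by Proposition~\ref{prop:level N moduli} the scheme $\cM_N^d$ is regular. Regularity of $\cM_N^d/H$ can be tested at closed points, and for a closed point $y$ with a chosen preimage $x$ and decomposition group $H_x=\{h\in H:hx=x\}$ one has, by the standard argument for finite group quotients, $\widehat{\cO}_{\cM_N^d/H,\,y}=(\widehat{\cO}_{\cM_N^d,\,x})^{H_x}$. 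So it suffices to show that $(\widehat{\cO}_{\cM_N^d,\,x})^{H_x}$ is regular for each such $x$.

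The hypothesis $\Supp N_1\cap\Supp N_2=\emptyset$ enters here to separate the two parts of the level structure. Because the Drinfeld full-set-of-sections condition at a prime $\wp$ of $A$ involves only the $\wp$-primary part of $N$, a level-$N$ structure is exactly a pair (level-$N_1$ structure, level-$N_2$ structure), and $H$ acts only on the first component. Since $N_2\ne 0$, the level-$N_2$ structure alone already rigidifies the Drinfeld module (a scalar automorphism fixing it is trivial), which is why the $H$-quotient is again representable and why $H_x$ is just the group of automorphisms of the $N_1$-part of the structure at $x$ compatible with the fixed Drinfeld module and its level-$N_2$ structure. Furthermore, if $x$ lies over $\wp$, then at most one of $\wp\in\Supp N_1$, $\wp\in\Supp N_2$ can hold, so one of the two parts of the level structure is \'etale near $x$ and contributes only a formally smooth factor to $\widehat{\cO}_{\cM_N^d,\,x}$; the delicate analysis can therefore be done prime by prime without interference.

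For the local analysis I would invoke the explicit description of $\widehat{\cO}_{\cM_N^d,\,x}$ coming from Drinfeld's and Katz--Mazur's study of level structures. At primes $\wp\notin\Supp N_1$ the $N_1$-structure is \'etale, $\cM_N^d$ is (\'etale-locally) a finite \'etale cover --- with Galois group a subquotient of $\GL$ --- of the level-$N_2$ moduli, and $H_x$ acts through a finite linear-type action on a regular system of parameters. At primes $\wp\in\Supp N_1$ one uses that $\widehat{\cO}_{\cM_N^d,\,x}$ is a regular ring in which the coordinates of the torsion points making up the $N_1$-structure, together with a uniformizer of $\widehat{A}_\wp$, form part of a regular system of parameters. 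The point of the admissibility condition defining $\cS_{N_1}$ is precisely to guarantee that every $H\in\cS_{N_1}$, and hence every $H_x$, preserves a filtration on these parameters along which it acts step by step through pseudo-reflections --- permuting and rescaling torsion-point coordinates while fixing the uniformizer --- so that the quotient can be formed one pseudo-reflection at a time, each quotient of a regular local ring by a (tame, or Artin--Schreier-type wild) pseudo-reflection being again regular; therefore $(\widehat{\cO}_{\cM_N^d,\,x})^{H_x}$ is regular.

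For the motivating examples the conclusion can be packaged at the level of moduli: when $H=\Aut_A(N_1)$ the quotient $\cM_N^d/H$ classifies rank-$d$ Drinfeld modules with a level-$N_2$ structure and a finite flat $A$-submodule scheme ``of type $N_1$'' (a $\Gamma_0(N_1)$-structure); when $H$ fixes a cyclic quotient of $N_1$ it classifies in addition a generator of that quotient (a $\Gamma_1(N_1)$-structure); for the parabolics one obtains a flag; and in each case regularity can be reproved directly by Drinfeld's deformation-theoretic argument, showing that the deformation functor of the reduced datum is pro-represented by a power series ring over $\widehat{A}_\wp$. The main obstacle, and the real content of the general Theorem~\ref{thm:main}, is exactly the last point of the previous paragraph: verifying at the wildly ramified primes $\wp\in\Supp N_1$ that the combinatorial definition of $\cS_{N_1}$ is restrictive enough to force the stabilizer action on the Drinfeld level-structure deformation space to have a regular ring of invariants, in the absence of any Chevalley--Shephard--Todd theorem there. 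The reductions, the fibre-product separation via disjoint supports, and the \'etale directions are all routine.
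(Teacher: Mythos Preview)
Your reductions (completion at closed points, separation of $N_1$ and $N_2$ via disjoint supports, \'etaleness at primes outside $\Supp N_1$) match the paper's, and you correctly identify that the content lies entirely in the analysis of $(\widehat{\cO}_{\cM_N^d,x})^{H_x}$ at a closed point over some $\wp\in\Supp N_1$. But the actual computation you sketch for that step is not right, and one preliminary reduction is missing.

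First, you omit the reduction to the supersingular case. The completed local ring at $x$ depends on the height $h\in\{1,\dots,d\}$ of the associated formal $\cO$-module, and the paper argues (Section~\ref{sec:existence} and the start of Section~\ref{sec:proof}) that because the singular locus is closed and points of every height specialize to a supersingular point, it is enough to treat $h=d$; only then is the deformation ring the explicit $D_{N_1}$ of Proposition~\ref{prop:formal} on which the group-theoretic computation takes place.

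Second, and more seriously, your description of the $H$-action as ``step by step through pseudo-reflections'' is false for admissible $H$ in general, and this is exactly where the paper's proof departs from Katz--Mazur. Take the simplest admissible example $H=\Aut_A(N_1)=\GL_d(\kappa(\wp))$ with $N_1=(A/\wp)^d$: a generic element of $\GL_d(\kappa(\wp))$ fixes no hyperplane in the parameter space, so the group is not generated by pseudo-reflections in any useful sense, and there is no Chevalley--Shephard--Todd statement available (you say this yourself). What the paper actually does is split an admissible $H$ into a ``unipotent-type'' normal subgroup $J$ and Levi quotients $L_i\cong\GL_{d_i}(\kappa(\wp))$. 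The $J$-invariants are handled by an inductive application of Katz--Mazur's Proposition~7.5.2 (your pseudo-reflection intuition is adequate for this part), but the invariants under the $L_i$ are computed by \textbf{Dickson's theorem} (Theorem~\ref{thm:Dickson}): $\kappa[x_1,\dots,x_{d_i}]^{\GL_{d_i}(\kappa)}$ is again a polynomial ring. This ingredient from modular invariant theory is what makes the argument go through for $d\ge 2$ Levi blocks, and it is absent from your proposal. Without it, the step ``each quotient \dots\ being again regular'' fails already for $H=\GL_2(\kappa(\wp))$.
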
 
\noindent

Recall that moduli of elliptic curves
with $\Gamma_0$ or $\Gamma_1$
structures,
usually denoted $Y_0(N)$ or $Y_1(N)$ 
for some 
integer $N$,
are quotients of the 
full level moduli $Y(N)$ by certain 
groups defined in terms of congruences modulo $N$.
Our admissible subgroups include 
function field analogue
of such groups.


The quotients of interest to us are those
appearing in the definition of Hecke correspondences.
Let $Y \xleftarrow{f} X \xrightarrow{g} Y$ be a diagram of schemes where $Y$ is smooth over a base field and $f$ is a finite surjective morphism.   Recall that this gives rise to a finite correspondence (see \cite[p.3]{MVW} for finite correspondence).
A typical example of a
Hecke correspondence is the finite correspondence
corresponding to the 
diagram above where $X$ is
a quotient by some parabolic subgroup of a full level moduli and 
$Y$ is a full level moduli.
(See \cite[p.14]{Laumon} for Hecke correspondences in general over $F$; see Section~\ref{sec:application} for examples.)
A finite correspondence acts on higher Chow groups \cite[p.142, Thm 17.21]{MVW} and the action corresponding 
to a Hecke correspondence is called Hecke operator.

Our theorem may make the computation of the Hecke operator easier 
in the following sense.
Our regularity theorem implies that $X$ is regular (smooth over the base field)
and the maps $f, g$ are finite flat.   In this case, the action of the finite correspondence
is given as the composition of the pullback by $g$ and the pushforward by $f$.

In our future paper, we will construct certain elements in
the higher Chow groups of Drinfeld modular schemes over $A$,
extending, in a sense, our elements \cite{KY:zeta}
in rational algebraic 
$K$-theory of Drinfeld modular schemes 
away from level.
We show that they are (again) 
norm compatible (we 
say that they form an Euler system), that is,
we express the pushforward of the elements 
in terms of Hecke actions relating to local 
$L$-factors.    
The computation uses Theorem \ref{thm:intro} 
in a way described in the previous paragraph.

\subsection{Outline of proof}
An outline of proof of regularity of quotients 
of moduli of elliptic curves 
is given
by Katz and Mazur in \cite[Thm 7.5.1, p.201]{KM}.
We follow their outline except that we need 
one additional ingredient, namely, 
Dickson's theorem \cite{Dickson}
from modular invariant theory.

Let us briefly recall the outline.
To prove the regularity of the quotient,
one first proves the regularity 
at the points away from the level.
The regularity in this case is the consequence 
of the fact that the quotient map 
becomes \'etale.
Now, to prove the regularity at a point $y$
on the bad fiber,
we look at the point $x$ 
lying above $y$.    
To the point $x$, there corresponds 
a Drinfeld module (with level $N$ structure),
and in turn a formal $\cO$-module.
It happens that the regularity at $y$
depends only on the height of 
the formal $\cO$-module.
Since the singular locus is closed, and 
there are points of arbitrary height near
the (most) supersingular point
(we prove its existence in Section~\ref{sec:existence}),
we are reduced to the case where the
formal $\cO$-module has maximal height.

To show the regularity of the ring of 
invariants in the supersingular case,
the two tools we use are
Proposition 7.5.2 of \cite{KM}
(Proposition~\ref{prop:KM} below)
and 
Dickson's theorem (Theorem~\ref{thm:Hewett}).

Note that a typical type of groups encountered 
by Katz and Mazur are subgroups of 
$\GL_2(\Z/N\Z)$ for some integer $N$,
and the essential ones are those contained 
in some Borel subgroup.    In this case,
successive application of their Proposition 7.5.2 
proves the regularity of the quotient.
A typical type in our case is a parabolic
subgroup of $\GL_d(A/I)$ for some ideal $I$ 
with $d \ge 2$.    In this case,
using Proposition 7.5.2 successively,
we are reduced to the computation of 
invariants by the Levi subgroups, 
each of which is $\GL_{d'}(\kappa)$ for
some finite field $\kappa$ and some positive integer $d'$.
This last case is the subject of 
modular invariant theory,
where Dickson's theorem is the
most basic theorem.

\subsection{Organization}
The paper is organized as follows.
We start with Section~\ref{sec:formal module}
on formal $\cO$-modules,
much like in the original paper \cite{Drinfeld} by Drinfeld.
We introduce structure of level $N$ 
for formal $\cO$-modules and construct
the universal deformation explicitly.
This will be useful in the proof of theorem.
The following Section~\ref{sec:divisible module} is 
on divisible $\cO$-modules 
with structure of level $N$.    
In Section~\ref{sec:level N moduli}, we define 
Drinfeld modules with structure of level $N$
and show, using the results of 
Sections~\ref{sec:formal module} and~\ref{sec:divisible module}, 
that the moduli 
is regular.
In Section~\ref{sec:admissible}, 
we define a set 
of subgroups of $\Aut_A(N)$, 
which we call admissible.   
The definition is formulated so that 
the proof of our theorem 
can be given using 
the tools (Proposition~\ref{prop:KM} 
and Dickson's theorem).
We give some examples.
In Section~\ref{sec:main}, we give a statement of 
our main theorem (Theorem~\ref{thm:main})
and its applications.   Hecke operators are 
discussed in this section.
In Section~\ref{sec:existence}, we prove the existence of
a supersingular point on the moduli.   
This is logically independent of other sections.
Section~\ref{sec:proof} is devoted to the proof of our main result.

{\bf{Acknowledgment}} 
During this research, the second author was supported by the JSPS 
KAKENHI Grant Number JP15H03610.
The authors thank Takehiko Yasuda for very useful discussions.

\section{Formal $\cO$-modules with structure of level $P$}
\label{sec:formal module}
Let $\wp \subsetneq A$
be a nonzero prime ideal.
Let $A_\wp$
be the ring of integers of the 
local field $F_\wp$ at $\wp$.
We fix a uniformizer $\pi \in A_\wp$.
Let $A_\wp^{ur}$
denote the ring of integers of the 
maximal unramified extension
and 
$\Awphatur$
denote the completion.
Let $\cO=A_\wp$.
\subsection{Definitions}
Let $\cC$ be the category 
of complete local $\Awphatur$-algebras 
with residue field $\kappawpbar := \Awphatur/(\pi)$, where
the morphisms are local $\Awphatur$-algebra homomorphisms.
Let $R \in \cC$ and 
let $(F,f)$ be a formal $\cO$-module (see \cite[p.563]{Drinfeld})
over $R$.
Let $P$ be a finitely generated torsion $\cO$-module.
\begin{defn}
A structure of level $P$
on $(F,f)$ is a morphism of $\cO$-modules
\[
\psi: P \to \frm_R,
\]
where $\frm_R \subset R$ is the maximal ideal
endowed with the $\cO$-module structure given by $(F,f)$,
such that 
the power series $f_\pi$ is divisible by
\[
\prod_{\alpha \in 
\Ker\, \pi: P \to P}
(x-\psi(\alpha)).
\]
\end{defn}

Let $(G, g)$ be a formal $\cO$-module over $\kappawpbar$
with (the unique) structure of level $N$. 
\begin{defn}
\label{def:deformation}
A deformation of $(G,g)$ is 
a formal $\cO$-module $(F,f)$
over $R$,
for some $R \in \cC$, with structure of level $P$ 
such that $(F, f) \mod \frm_R$ is isomorphic to $(G,g)$.
\end{defn}

Let $D_P=D_{d,P}$ denote the functor that associates
$R \in \cC$ with the set of isomorphism classes of 
deformations over $R$ of $(G, g)$.

\subsection{Universal deformation space of formal
$\cO$-modules over $\overline{\kappa(\wp)}$}
\label{sec:universal deformation}
The following is due to Lubin-Tate and Drinfeld, and
the details are given in \cite{GH}. 

Let $d \ge 1$.
We define a formal $\cO$-module
$\Fhat_d$ 
over the ring 
$\cO[[t_1,\dots, t_{d-1}]]$
of formal power series as follows.
As a formal group,
$\Fhat_d=\widehat{\Ga}$.
The action of $a \in \kappa(\wp) \subset \cO$
on $\Fhat_d$ is 
given by the power series
$f_a(X)=aX$,
and the action of $\pi$
is given by the power series
\[
f_\pi(X)=
\pi X+t_1 X^{q_\wp}+\cdots+t_{d-1} X^{q_\wp^{d-1}}+X^{q_\wp^d}.
\]
Set $F_d=\Fhat_d \otimes_{\cO[[t_1, \dots, t_{d-1}]]}
\kappa(\wp)$.
Then $F_d$ is a formal 
$\cO$-module of height $d$ over $\kappa(\wp)$.
By \cite[Proposition 1.6, p.\ 566]{Drinfeld},
any formal $\cO$-module over 
$\overline{\kappa(\wp)}$
of height $d$ is isomorphic to
$F_d \wh{\otimes}_{\kappa(\wp)}
\overline{\kappa(\wp)}$.
\begin{prop}(Lubin-Tate, Drinfeld, Gross-Hopkins)
\label{prop:LTDGH}
The formal $\cO$-module 
$\Fhat_d \wh{\otimes}_{\cO[[t_1, \dots, t_{d-1}]]}
\Awphatur[[t_1, \dots, t_{d-1}]]$
is the universal deformation of 
$F_d \wh{\otimes}_{\kappa(\wp)} \overline{\kappa(\wp)}$.
\end{prop}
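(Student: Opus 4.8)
The plan is to recognise this as the theorem of Lubin--Tate in its form for formal $\cO$-modules (due to Drinfeld, with full details in \cite{GH}): since no level structure enters here, the claim is that the deformation functor $D_{d,0}$ on $\cC$ is pro-represented by $\Awphatur[[t_1,\dots,t_{d-1}]]$, the universal object being $\Fhat_d$ base-changed. As is standard, a ``deformation'' here should be understood to carry a chosen identification of its special fibre with $F_d \wh{\otimes}_{\kappa(\wp)} \kappawpbar$, so that the functor is rigid. First I would check that $\Fhat_d$, after base change along $\cO[[t_1,\dots,t_{d-1}]]\to\Awphatur[[t_1,\dots,t_{d-1}]]$ (an object of $\cC$), is a deformation of $F_d \wh{\otimes}_{\kappa(\wp)} \kappawpbar$: the only non-formal point is that $f_\pi$ must commute with the scalar action of $a\in\kappa(\wp)$, which holds term by term because $\kappa(\wp)=\F_{q_\wp}$ gives $a^{q_\wp^i}=a$; reduction modulo $(\pi,t_1,\dots,t_{d-1})$ then turns $f_\pi$ into $X^{q_\wp^d}$, that is, into $F_d$.

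Next I would verify the Schlessinger conditions for $D_{d,0}$ --- routine for formal $\cO$-modules, once one reduces to Artinian rings by completeness --- so that $D_{d,0}$ is pro-represented by a complete local $\Awphatur$-algebra $S$. Two further inputs pin $S$ down. The first is that $D_{d,0}(\kappawpbar[\eps]/(\eps^2))$ has dimension $d-1$ over $\kappawpbar$: after conjugating the $\kappa(\wp)$-action into scalar form (possible since $\#\kappa(\wp)^\times$ is prime to the characteristic), a deformation over $\kappawpbar[\eps]/(\eps^2)$ has $f_\pi=X^{q_\wp^d}+\eps(a_1X^{q_\wp}+a_2X^{q_\wp^2}+\cdots)$, and a short conjugation computation with the residual $\F_{q_\wp}$-linear isomorphisms --- in which the relevant Frobenius derivatives vanish in characteristic $p$ --- brings this to the normal form $X^{q_\wp^d}+\eps(a_1X^{q_\wp}+\cdots+a_{d-1}X^{q_\wp^{d-1}})$ with $a_1,\dots,a_{d-1}$ as isomorphism invariants, so the class is $(a_1,\dots,a_{d-1})\in\kappawpbar^{d-1}$. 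The second is unobstructedness, the standard fact that a one-dimensional formal $\cO$-module has no deformation obstruction. Together these give $S\cong\Awphatur[[x_1,\dots,x_{d-1}]]$.

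It then remains to identify the universal object. The deformation $\Fhat_d \wh{\otimes}_{\cO[[t_1,\dots,t_{d-1}]]}\Awphatur[[t_1,\dots,t_{d-1}]]$ is classified by a local $\Awphatur$-algebra homomorphism $\kappa\colon S\to\Awphatur[[t_1,\dots,t_{d-1}]]$; since both rings are power series rings over $\Awphatur$ in $d-1$ variables, $\kappa$ is an isomorphism as soon as it is one on relative cotangent spaces. And it is: modulo $(\pi)+(t_1,\dots,t_{d-1})^2$ one has $f_\pi\equiv t_1X^{q_\wp}+\cdots+t_{d-1}X^{q_\wp^{d-1}}+X^{q_\wp^d}$, so the variables $t_i$ go precisely to the tangent coordinates $a_i$ found above. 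Hence $\Fhat_d \wh{\otimes}_{\cO[[t_1,\dots,t_{d-1}]]}\Awphatur[[t_1,\dots,t_{d-1}]]$ is the universal deformation.

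I expect the real content to lie in the normal-form analysis behind the tangent-space computation --- deciding exactly which coefficients of $f_\pi$ are clearable by isomorphisms of which kind --- together with the accompanying rigidity statement, that an isomorphism of deformations which reduces to the identity and fixes $f_\pi$ is the identity; this is what makes the uniqueness and the inductive bookkeeping go through. Granting that analysis, both the pro-representability via Schlessinger and the identification of the universal object are formal.
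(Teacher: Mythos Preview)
Your outline is correct, and indeed follows the classical Lubin--Tate/Drinfeld argument as presented in \cite{GH}. The paper, however, does not reproduce any of this: its entire proof is a one-line citation of \cite[p.45, Prop.~12.10]{GH}. So you have done considerably more than the paper asks, sketching the deformation-theoretic content (Schlessinger, tangent-space normal form, smoothness, cotangent identification) that the paper simply imports as a black box. What you gain is a self-contained argument and an explicit identification of the tangent coordinates $a_1,\dots,a_{d-1}$ with the parameters $t_1,\dots,t_{d-1}$, which is pleasant but not needed downstream; what the paper gains by citing is brevity, since the proposition is only used as input to Proposition~\ref{prop:formal} and nothing in the later arguments requires knowing \emph{why} $\Awphatur[[t_1,\dots,t_{d-1}]]$ pro-represents $D_{d,0}$.
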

\begin{proof}
This is \cite[p.45, Prop.12.10]{GH}.
\end{proof}

\subsection{Universal deformation space of formal $\cO$-module
over $\overline{\kappa(\wp)}$ of level $P$}
We need the description of the universal deformation ring
$D_P$
of $F_d \wh{\otimes}_{\kappa(\wp)} \overline{\kappa(\wp)}$
of level $P$.
Set $D_0=\cO[[t_1, \dots, t_{d-1}]]$
by abuse of notation.
\begin{prop}
\label{prop:formal}
Let $\{e_i\}_{1 \le i \le r} \subset P$ be 
a minimal set of generators of $P$ as $\cO$-module.
If $r>d$, then the functor $D_P$ is representable by
an empty scheme. Suppose that $r \le d$.
Then the functor $D_P$ is representable by
a regular $D_0$-algebra. We let, by abuse of notation, 
$D_P$ denote the representing ring.
Then the images of the $e_i$'s in $D_P$ by 
the universal level structure and 
$t_i$'s for 
$r \le i \le d-1$ 
form
a regular system of parameters.
\end{prop}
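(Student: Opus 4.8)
The plan is to realize $D_N$ as an iterated module-finite extension of $D_0=\cO[[t_1,\dots,t_{d-1}]]$, obtained by adjoining the universal level structure one cyclic quotient at a time, and to verify at each stage --- by a Weierstrass-preparation computation in the spirit of Drinfeld's analysis of the full level $\pi^n$ case --- that one is adjoining a root of a distinguished polynomial of \emph{trading type}, so that regularity is preserved and one old local parameter of $D_0$ is replaced by a point of the level structure. Recall that, by the proposition of Lubin--Tate, Drinfeld, and Gross--Hopkins quoted above, $D_0$ is regular local of dimension $d$, with regular system of parameters $\pi,t_1,\dots,t_{d-1}$.

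First I would reduce, using the structure theorem for finitely generated torsion modules over the discrete valuation ring $\cO$, to the case $N=\bigoplus_{i=1}^{r}\cO/\pi^{n_i}\cO$ with $n_1\ge\cdots\ge n_r\ge 1$ and $e_i$ the standard generator of the $i$th summand; the case $r=0$ being trivial, I assume $r\ge 1$. For a deformation $(F,f)$ over $R\in\cC$, a structure of level $N$ is the same datum as a tuple $(x_1,\dots,x_r):=(\psi(e_1),\dots,\psi(e_r))\in\frm_R^{r}$ for which $f_\pi$ is divisible in $R[[X]]$ by the monic polynomial
\[
  P_\psi(X):=\prod_{\alpha\in\Ker(\pi\colon N\to N)}\bigl(X-\psi(\alpha)\bigr)
\]
of degree $q_\wp^{r}$, whose roots are the $q_\wp^{r}$ elements $f_{a_1\pi^{n_1-1}}(x_1)+_F\cdots+_F f_{a_r\pi^{n_r-1}}(x_r)$, $(a_i)\in\kappa(\wp)^{r}$, where $+_F$ is the addition of the formal $\cO$-module. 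The conditions $f_{\pi^{n_i}}(x_i)=0$ are then automatic, since each $x_i$ occurs among these roots. As $P_\psi\equiv X^{q_\wp^{r}}\pmod{\frm_R}$ while $f_\pi$ reduces mod $\frm_R$ to a series of $X$-order exactly $q_\wp^{d}$ --- because $(F,f)\bmod\frm_R$ has height $d$ --- the divisibility forces $q_\wp^{r}\le q_\wp^{d}$, i.e.\ $r\le d$; this proves the first assertion. For $r\le d$ it presents $D_N$, once representability is known, as the quotient of $D_0\{x_1,\dots,x_r\}$ by the ideal generated by the coefficients of the remainder of $f_\pi$ upon division by the monic polynomial $P_\psi$; representability and regularity then follow from the construction below.

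Assume $r\le d$. I would fix a chain $0=N^{(0)}\subset N^{(1)}\subset\cdots\subset N^{(\ell)}=N$ with each $N^{(k+1)}/N^{(k)}\cong\cO/\pi$ and every $N^{(k)}$ again a direct sum of cyclic $\cO$-modules, ordered so that all summands are created (with exponent $1$) before any exponent is raised, and respecting $n_1\ge\cdots\ge n_r$. Passing to deformation rings gives a tower $D_0=D_{N^{(0)}}\to\cdots\to D_{N^{(\ell)}}=D_N$ whose steps are of two kinds. In a step of type (II), replacing a summand $\cO/\pi^{m}$ of generator $x$ by $\cO/\pi^{m+1}$ of generator $x'$, the image of $\Ker\pi$ --- hence $P_\psi$ --- is unchanged, because $f_{\pi^{m}}(x')=f_{\pi^{m-1}}(x)$; so the only new relation is $f_\pi(x')=x$ (reflecting $x=\pi\cdot x'$ in $N^{(k+1)}$), giving $D_{N^{(k+1)}}=D_{N^{(k)}}[x']/\bigl(f_\pi(X)-x\bigr)$ with $f_\pi(X)-x$ monic of degree $q_\wp^{d}$ in $X$, congruent to $X^{q_\wp^{d}}$ modulo the maximal ideal, and of constant term $-x$. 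In a step of type (I), enlarging a sum of $s<d$ cyclic summands by one more summand $\cO/\pi$ of new generator $y$, put $h:=f_\pi/P_{\psi^{(k)}}\in D_{N^{(k)}}[X]$ (a polynomial, since the monic $P_{\psi^{(k)}}$ divides $f_\pi$); then $h\equiv(\text{unit})\,X^{q_\wp^{d}-q_\wp^{s}}$ modulo the maximal ideal, and the level-$N^{(k+1)}$ divisibility amounts, after cancelling the known factor $P_{\psi^{(k)}}$, to $h$ being divisible by the monic polynomial $\prod_{a\in\kappa(\wp)^\times}\prod_{v_0}\bigl(X-(v_0+_F f_a(y))\bigr)$ of degree $q_\wp^{s}(q_\wp-1)$, the inner product running over the $q_\wp^{s}$ roots $v_0$ of $P_{\psi^{(k)}}$.

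The regularity at each step rests on the elementary observation that if $(A,\frm_A)$ is a complete regular local ring with regular system of parameters $(a_1,\dots,a_d)$, and $G(Y)\in A[Y]$ is monic with $G(Y)\equiv Y^{m}\pmod{\frm_A}$ and $G(0)$ a unit times $a_1$ modulo $(a_2,\dots,a_d)+\frm_A^{2}$, then $A[Y]/(G)$ is module-finite over $A$ of Krull dimension $d$, and the relation $G(Y)=0$ forces $G(0)\in(Y)$, whence $a_1$ lies in $(Y,a_2,\dots,a_d)$ modulo the square of the maximal ideal; thus $(Y,a_2,\dots,a_d)$ is a regular system of parameters and $A[Y]/(G)$ is regular, with $a_1$ traded for $Y$. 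A type (II) step applies this with $G=f_\pi(Y)-x$, $G(0)=-x$, trading $x$ for $x'$; carrying out all the exponent-raisings of the $i$th summand leaves $\psi(e_i)$ in the parameter list. For a type (I) step the crux is to show that the residual divisibility of $h$ described above reduces, via Weierstrass preparation over the complete regular local ring $D_{N^{(k)}}$, to a single relation $G(y)=0$ of trading type, whose lowest coefficient is a unit times $\pi$ at the first type (I) step and a unit times $t_s$ at the step passing from $s\ge1$ to $s+1$ summands. Granting this, induction shows every $D_{N^{(k)}}$ is representable and regular; running through all the steps trades $\pi,t_1,\dots,t_{r-1}$ for $\psi(e_1),\dots,\psi(e_r)$ and keeps $t_r,\dots,t_{d-1}$, so $D_N$ is regular with regular system of parameters $\psi(e_1),\dots,\psi(e_r),t_r,\dots,t_{d-1}$, as asserted. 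The principal obstacle is exactly this type (I) computation: performing the Weierstrass preparation while keeping careful track of the formal-group additions $+_F$ occurring inside the factors of $P_\psi$, so that the translated subproducts $\prod_{v_0}\bigl(X-(v_0+_F f_a(y))\bigr)$ for $a\in\kappa(\wp)^\times$ are correctly related to $P_{\psi^{(k)}}$ and the bottom coefficient of the resulting relation really does isolate one old parameter --- this is the several-generator refinement of Drinfeld's deformation-ring computation for the full level $\pi^n$ structure.
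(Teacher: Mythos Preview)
Your proposal is correct and follows essentially the same route as the paper. Both arguments reduce to the standard cyclic decomposition, handle the base case $N=(\cO/\pi)^r$ by invoking Drinfeld's computation of $L_r$ (your type~(I) steps, which you rightly flag as the real work), and then raise exponents by adjoining roots of $f_\pi(X)-x$ (your type~(II) steps, identical to the paper's inductive formula $D_N=D_{N'}[[\theta_{s+1},\dots,\theta_r]]/(f_\pi(\theta_i)-\psi(e_i))$). The only difference is granularity: you climb one $\cO/\pi$-quotient at a time while the paper inducts on the maximal exponent and raises several summands simultaneously; your ``trading'' lemma makes the regularity verification more explicit than the paper's terse construction, but the content is the same.
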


This statement follows essentially from the proof of the full level 
case treated in \cite{Drinfeld}.   
As we want to use the description of the 
ring $D_P$, we give some details below.

\begin{proof}
It suffices
to treat the case 
$P=A/\wp^{n_1} \oplus \cdots \oplus A/\wp^{n_r}$
with $r \le d$, $n_1 \le \cdots \le n_r$
and $e_i=\bar{1} \in A/\wp^{n_i} \subset P$
for $1 \le i \le r$ with 
$1 \le n_1 \le \dots \le n_r$.
We construct the rings $D_P$ 
by induction on the exponent (i.e., the $n_r$) of $P$.

Suppose the exponent is 1.
Then $P=(A/\wp)^r$ for some $1 \le r \le d$.
In this case, 
the claim holds true with
$D_P=L_r$ of \cite[p.572, Lemma]{Drinfeld}.

Suppose the claim holds true for $k< n_r$.
Write
\[
P=A/\wp^{n_1} 
\oplus \cdots \oplus 
A/\wp^{n_s}
\oplus
A/\wp^k
\oplus \cdots \oplus
A/\wp^k
\]
with $1 \le n_1 \le \cdots \le
n_s <k$.
Set 
\[
P'=A/\wp^{n_1} 
\oplus \cdots \oplus
A/\wp^{n_s}
\oplus
A/\wp^{k-1}
\oplus \cdots \oplus
A/\wp^{k-1}.
\]

By inductive hypothesis,
$D_{P'}$ is representable, and 
a regular system of parameters is given
by the images of 
$e_1, \dots, e_s, e_{s+1}, \dots, e_r$
via the universal level structure
and 
$t_r, \dots, t_{d-1}$.

Now, set
\[
D_P=D_{P'}[[\theta_{s+1}, \dots, \theta_r]]/
(
f_\pi(\theta_{s+1})-\psi(e_{s+1}),
\dots,
f_\pi(\theta_{r})-\psi(e_{r})
)
\]
where $\psi$ is the universal level structure of level $P'$.
Then, the claim holds true.
\end{proof}

\section{Divisible $\cO$-modules with structure of level $P$}
\label{sec:divisible module}

\subsection{}
Let $R \in \cC$.
We refer to \cite[p.574 C)]{Drinfeld}
for the definition of a divisible $\cO$-module over $R$.
Let $(F, f)$ be a divisible $\cO$-module over $R$.
The number $j$ is defined by 
$F/F_{loc} \cong \Spf R \times (\cK/\cO)^j$
where $\cK$ is the field of fractions of $\cO$.
Let $h$ denote the 
height (assumed to be finite) 
of the 
reduction of $F_{loc}$.

\begin{defn}
A structure of level $P$ on a divisible
$\cO$-module
$(F,f)$
 is an $\cO$-module homomorphism
\[
\phi: P \to \Hom_{\Spf R}(\Spf R, F)
\]
such that (1) There is a submodule $P_1 \subset P$
the restriction of $\phi$ to which is a structure of level
$P_1$ on the formal $\cO$-module $F_{loc}$, and
(2) The induced map
$P/P_1 \to F/F_{loc}$
is an injection.
\end{defn}

\subsection{}
Let $(G,g)$ be a divisible $\cO$-module
over $\overline{\kappa(\wp)}$
with structure of level $P$ such that
$G_{loc}$
has height $h$ and 
$G/G_{loc} \cong (\cK/\cO)^j$.
The deformation of $(G,g)$ is defined.
\begin{prop}
\label{prop:divisible}
The functor that sends $R \in \cC$
to the set of deformations of level $P$
of the divisible module $(G,g)$ over $R$
is represented by the ring
\[
E_{(G, g)} \cong D_{P_1}[[d_1, \dots, d_j]]
\]
where $P_1$ is the submodule that appears 
in the level $P$ structure of $(G, g)$.
\end{prop}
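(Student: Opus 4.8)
The plan is to analyze the deformation functor of $(G,g)$ directly, separating the ``formal'' part $G_{loc}$ from the étale quotient $G/G_{loc}$, and to bootstrap from the already-established Proposition~\ref{prop:formal}. First I would recall the classical fact (going back to Drinfeld, \cite[p.574 C)]{Drinfeld}) that a divisible $\cO$-module $(F,f)$ over $R\in\cC$ sits in a canonical connected-étale exact sequence whose connected part is the formal $\cO$-module $F_{loc}$ and whose étale part is $\cong (\cK/\cO)^j$ (the number $j$ is locally constant, hence constant on $\Spf R$ since $R$ is local). Deforming $(G,g)$ therefore amounts to: (i) deforming $G_{loc}$ as a formal $\cO$-module, (ii) deforming the constant étale part, which is rigid, and (iii) deforming the extension class, i.e.\ choosing a lift of the extension of $(\cK/\cO)^j$ by the deforming formal module.

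Next I would handle the level structure. By definition of a level $N$ structure on a divisible module, there is a submodule $N_1\subset N$ whose image under $\phi$ lands in $F_{loc}$ and restricts to a level $N_1$ structure on $F_{loc}$, while $N/N_1 \hookrightarrow F/F_{loc}$. For the base object $(G,g)$ the submodule $N_1$ is fixed. The étale condition that $N/N_1 \hookrightarrow (\cK/\cO)^j$ is an injection is an \emph{open} condition that is automatically inherited by any deformation (lifting a point-section of an étale group scheme over a local base is unobstructed and unique, and injectivity is detected on the special fiber); hence the étale part of the level structure contributes nothing new to the deformation ring beyond the rigid étale group itself. So a deformation of $(G,g)$ of level $N$ is the data of: a deformation of $G_{loc}$ of level $N_1$ (represented by $D_{N_1}$ by Proposition~\ref{prop:formal}), together with the extension data of step (iii) above.

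It remains to identify the extension data with the $j$ formal variables $d_1,\dots,d_j$. The point is that extensions of the constant étale $\cO$-module $\cK/\cO$ by a fixed formal $\cO$-module $\cF$ over $R$ are classified by $\cF$-valued points of a suitable ``completed'' Ext, and since $R\in\cC$ is a complete local ring with $\cF$ a one-dimensional formal group, each copy of $\cK/\cO$ contributes exactly one coordinate $d_i \in \frm_R$ (the image, in the appropriate sense, of a generator of $\cK/\cO$), with no relations: the universal such extension is unobstructed. Running this over the universal deformation $D_{N_1}$ of $G_{loc}$ of level $N_1$ gives representability by $D_{N_1}[[d_1,\dots,d_j]]$, which is the asserted formula. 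Concretely one writes down, over $D_{N_1}[[d_1,\dots,d_j]]$, the divisible module whose formal part is the universal formal deformation and whose étale-quotient sections are governed by the $d_i$, checks it is a deformation of $(G,g)$, and verifies the universal property by the preceding discussion.

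The main obstacle I expect is step (iii): making precise the claim that deforming the extension class is \emph{unobstructed} and contributes exactly one free parameter per étale generator. This requires a clean formulation of ``extensions of $\cK/\cO$ by a formal $\cO$-module over $R\in\cC$'' — most naturally via the exact sequence $0\to \cF(R) \to \cF(R)[1/\pi] \to (\cK/\cO)\otimes(\text{something}) \to 0$ or via the Drinfeld-module picture where the étale quotient is recovered by inverting $\pi$ on a lattice — together with the vanishing of the relevant $\Ext^2$ (no obstructions) and the computation of $\Ext^1$ as a free $R$-module of rank $j$. Once that homological input is in place, everything else is the bookkeeping of assembling $D_{N_1}$ with the $d_i$, which is routine.
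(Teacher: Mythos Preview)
Your proposal is correct and follows essentially the approach the paper has in mind: the paper's own proof is simply a reference to Drinfeld's Proposition~4.5 \cite[p.574]{Drinfeld}, of which this statement is a mild variant, and your outline recovers that argument---separate the connected and \'etale parts via the canonical exact sequence, invoke Proposition~\ref{prop:formal} for the connected part with level $N_1$, note that the \'etale quotient together with the injection $N/N_1 \hookrightarrow (\cK/\cO)^j$ is rigid, and identify the remaining extension data with the $j$ free formal parameters $d_1,\dots,d_j$. The ``obstacle'' you flag in step~(iii) is exactly the content of Drinfeld's original computation, so it is already available in the literature rather than something that needs to be reproved.
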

\begin{proof}
The proof is almost identical to that of 
\cite[Prop.4.5, p.574]{Drinfeld}, hence omitted.
\end{proof}

\section{Regularity of moduli of level $N$}
\label{sec:level N moduli}
\subsection{}
Let $S$ be an $A$-scheme.
Let $E \to S$ be a Drinfeld module over $S$.
Let $N$ be a torsion $A$-module.  
\begin{defn}
A structure of level $N$ on $E$ is a homomorphism
of $A$-modules 
$\psi:N \to \Hom_{S\text{-schemes}}(S, E)$
such that,
for any element $a \in A$, 
the Cartier divisor
\[
\sum_{x \in \Ker\, a:N \to N} \psi(x)
\]
of $E$ is a closed subscheme of $E[a]=\Ker(a:E \to E)$.
\end{defn}

\subsubsection{}
Let $I \subset A$ be a nonzero ideal
and $d$ be the rank of $E$.
Then the case where $N=(I^{-1}/A)^d$
is the structure of level $I$ as defined
by Drinfeld \cite{Drinfeld}.

\subsubsection{}
Let $N$ be a nonzero finitely generated
torsion $A$-module.
For a nonzero prime ideal $\wp$ of $A$,
let $N_{\wp}$ denote the $\wp$-primary
part so that
$N=\oplus_{\wp} N_{\wp}$
is the primary decomposition.
Let $\psi: N \to \Hom(S, E)$
be a map and 
let $\psi_\wp:N_\wp \to
\Hom(S, E)$ be the restriction
for each nonzero prime ideal $\wp$.
Then, $\psi$ 
is a structure of level $N$
if and only if 
each $\psi_\wp$ 
is a structure of level $N_\wp$.

\subsection{}
Let $U \subset \Spec A$
be an open subscheme.
Let $\cM_{N, U}^d$ denote 
the functor
\[
(U\text{-scheme})
\to
(Set)
\]
that sends a $U$-scheme $S$ to 
the set of isomorphism classes 
of Drinfeld modules of rank $d$ over $S$
with structure of level $N$.

\begin{prop}
\label{prop:level N moduli}
Let $N$ be a nonzero finitely generated torsion $A$-module. 
\begin{enumerate}
\item
Suppose 
$|\Supp N| \ge 2$.
Let $U \subset \Spec A$ 
be an open subscheme.
Then the functor $\cM_{N,U}^d$ is representable
by a regular affine $U$-scheme.
\item
Let $Z \subset \Supp N$ be a nonempty subset.
Let $U \subset \Spec A \setminus Z$ 
be an open subscheme.
Then the functor $\cM_{N,U}^d$
is representable by a regular affine 
$U$-scheme.
\end{enumerate}
\end{prop}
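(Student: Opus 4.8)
The plan is to reduce the representability and regularity of $\cM_{N,U}^d$ to the analysis of the local deformation rings of divisible $\cO$-modules carried out in Section~\ref{sec:divisible module}, following the strategy of Drinfeld's treatment of the full level $I$ case. First I would treat case (2), where the level is inverted over the base $U$. Over such a $U$, the $\wp$-primary parts $N_\wp$ for $\wp \in Z$ are zero, so $N$ is supported away from $U$; the scheme $E[a]$ is finite étale over $S$ for $a$ generating (a power of) the support of $N$, and a structure of level $N$ is then, by the divisor condition in the definition, the same as an injection $N \inj E[a](S)$ of $A$-modules that is a "full level structure" in the usual sense away from $U$. Rigidity of Drinfeld modules gives that the functor is relatively representable and finite étale over the level-$1$ moduli $\cM_{1,U}^d$ (which is a smooth affine $U$-scheme of relative dimension $d-1$), hence $\cM_{N,U}^d$ is a regular affine $U$-scheme. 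This is essentially Drinfeld's argument with $(I^{-1}/A)^d$ replaced by $N$; the only new point is bookkeeping with a general torsion module $N$ in place of a free $A/I$-module, and I would spell this out via the primary decomposition $N=\oplus_\wp N_\wp$ together with the last remark of Section~\ref{sec:level N moduli}.\ref{sec:level N moduli} that reduces a level $N$ structure to the collection of level $N_\wp$ structures.

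For case (1), where $|\Supp N|\ge 2$, I would argue representability first by a standard relative-representability argument over $\cM_{1,U}^d$: the condition defining a level $N$ structure is closed, so $\cM_{N,U}^d \to \cM_{1,U}^d$ is representable by an affine scheme. Regularity is then checked at each point, and here the key case-split is between points lying over the "good" locus (where at least one prime in $\Supp N$ is invertible) and points over the bad fibers. Over the good locus one is in the situation of case (2) and the cover is étale. At a point $x$ of a bad fiber over a prime $\wp$, the complete local ring of $\cM_{N,U}^d$ at $x$ is identified with a deformation ring of the associated divisible $\cO$-module with level $N$ structure; by Proposition~\ref{prop:divisible} this ring is $E_{(G,g)}\cong D_{N_1}[[d_1,\dots,d_j]]$, and by Proposition~\ref{prop:formal} the ring $D_{N_1}$ is regular (here $N_1$ is the submodule of $N_\wp$ realized inside the formal part, which has at most $d$ generators because the divisible module has height $\le d$, and the remaining generators of $N_\wp$ inject into the étale quotient, contributing the $d_i$'s). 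Since a power series ring over a regular local ring is regular, the complete local ring at $x$ is regular, hence $\cM_{N,U}^d$ is regular at $x$.

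The main obstacle I anticipate is the identification in the previous paragraph: showing that the complete local ring of $\cM_{N,U}^d$ at a closed point of a bad fiber is \emph{exactly} the universal deformation ring of the corresponding divisible $\cO$-module with level $N$ structure, with matching level data. This requires (a) the Serre–Tate / Drinfeld dictionary relating Drinfeld modules near $\wp$ to their $\wp$-divisible modules, (b) checking that a level $N$ structure on the Drinfeld module induces, and is determined by, a level $N$ structure on the divisible module in the sense of Section~\ref{sec:divisible module} — in particular that the decomposition $N_\wp = N_1 \oplus (N_\wp/N_1)$ into "formal" and "étale" pieces is the right one and varies correctly in families, and (c) that the divisor condition in the Drinfeld-module definition corresponds to the divisibility condition in the formal-module definition. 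Points (a) and (c) are formal consequences of Drinfeld's results once set up carefully; the genuinely delicate part is (b), because over a non-reduced base the splitting of $N_\wp$ need not be canonical and one must argue via the connected-étale sequence of the $\wp$-divisible module, which is where the hypothesis that one controls heights (via Section~\ref{sec:existence}) and the condition $r\le d$ from Proposition~\ref{prop:formal} enter. Once this dictionary is in place, the regularity statement in both cases follows formally from Propositions~\ref{prop:formal} and~\ref{prop:divisible}.
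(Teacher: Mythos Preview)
Your overall strategy matches the paper's, which simply refers to Drinfeld's proof of the full level $I$ case \cite[p.~576, Prop.~5.3]{Drinfeld}: representability via rigidity, then regularity via the deformation theory of divisible $\cO$-modules (Propositions~\ref{prop:formal} and~\ref{prop:divisible}). Your case~(1) outline --- identifying the complete local ring at a closed point of a bad fiber with $E_{(G,g)} \cong D_{N_1}[[d_1,\dots,d_j]]$ and invoking regularity of $D_{N_1}$ --- is exactly the intended argument.

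Your case~(2) argument, however, misreads the hypothesis. You write that ``the $\wp$-primary parts $N_\wp$ for $\wp \in Z$ are zero, so $N$ is supported away from $U$'' and conclude that the map to level-$1$ moduli is finite \'etale. But $Z \subset \Supp N$ means precisely that $N_\wp \neq 0$ for every $\wp \in Z$, and case~(2) only assumes $U \cap Z = \emptyset$, not $U \cap \Supp N = \emptyset$; there may be primes $\wp \in (\Supp N \setminus Z) \cap U$, and at those the level structure is \emph{not} \'etale. The nonemptiness of $Z$ serves only to kill automorphisms (rigidity), which gives representability; regularity at the remaining bad primes still requires the deformation-theoretic argument you give for case~(1). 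Relatedly, the base you use, ``the level-$1$ moduli $\cM_{1,U}^d$ (a smooth affine $U$-scheme)'', is not a fine moduli scheme: Drinfeld modules with trivial level have automorphism group containing $\F_q^\times$, so there is no universal family over such a base. The relative-representability step should instead be run over an auxiliary full level $I$ moduli (with $V(I) \subset Z$ in case~(2), or $|V(I)| \ge 2$ in case~(1)), exactly as in Drinfeld.
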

\begin{proof}
The representability and regularity can 
be proved in 
a manner 
similar to the case of structure of level $I$
as in the proof of \cite[p.576, Prop. 5.3]{Drinfeld}.
\end{proof}

\section{Admissible subgroups}
\label{sec:admissible}
The goal of this section is to define the set $\cS_N$ of subgroups
of $\Aut_A(N)$, which we call admissible.   

\subsection{}
Let $N$ be a finitely generated torsion $A$-module, generated by at most 
$d$ elements.
Let 
$N=\bigoplus_{\wp \in \Spec A, \wp\neq (0)} N_\wp$
be the primary decomposition of $N$.
We will define a set $\cS_{N_\wp}$ 
of subgroups of $\Aut_A(N_\wp)$ (automorphisms as $A$-module)
for each $\wp$.
Set $\cS_N=\prod_{\wp \in \Spec A, \wp\neq (0)}\cS_{N_\wp}$.
This $\cS_N$ is regarded as a set of subgroups of $\Aut_A(N)$ 
in a natural manner.

From here on, we fix a nonzero prime ideal $\wp$ of $A$ and 
assume that $N$ is a $\wp$-primary torsion $A$-module
generated by at most $d$ elements.

\subsection{} \label{subsec:admissible}
\subsubsection{}
Let us take an isomorphism
$N \cong A_1 \oplus \dots \oplus A_r$
where 
$1 \le r \le d,
A_i=A/\wp^{n_i},
1 \le n_i,
1 \le i \le r$.

We use the following description of 
$\Hom_A(N,N)$ by matrices:
\[
\Hom_A(N,N)=
\{
(\varphi_{i,j})_{1 \le i,j \le r}
|
\varphi_{i,j} \in \Hom_A(A_i, A_j)
\}.
\]
We have canonically 
$\Hom_A(A_i, A_j)= \wp^{n_{i,j}}/\wp^{n_j}$
where
$n_{i,j}=\max\{0, n_j-n_i\}$.
Let $m_{i,j} \in \Z$ with 
$n_{i,j} \le m_{i,j} \le n_j$
for
$1 \le i,j \le r$.
We set
\[
H_{(m_{i,j})}
=
\{
(\varphi_{i,j})_{1 \le i,j \le r}
|
\varphi_{i,j} 
\in
(\delta_{i,j}+\wp^{m_{i,j}})
/
\wp^{n_j}
\}
\]
where $\delta_{i,j}$ 
is the Kronecker delta,
and regard it as a subset 
of $\Hom_A(A_i, A_j)$.

\subsubsection{}\label{par:admissible_subgp}
We consider the following condition for a
subgroup $H \subset \Aut_A(N)$:
\[
(*1): H = H_{(m_{i,j})} \cap
\Aut_A(N) \text{ for some $m_{i,j} \in \Z$ with 
$n_{i,j} \le m_{i,j} \le n_j$}.
\]
\begin{defn}
We say that a subgroup $H \subset \Aut_A(N)$
is an admissible subgroup
if 
Condition (*1) is satisfied
for some direct sum decomposition
$N=A_1 \oplus \dots \oplus A_r$
where 
$1 \le r \le d,
A_i=A/\wp^{n_i},
1 \le n_i,
1 \le i \le r$. 
We denote by $\cS_N$
the set of admissible subgroups of $\Aut_A(N)$.
\end{defn}

\subsubsection{}\label{sec:523}
We introduce some more notation to 
investigate properties satisfied by admissible subgroups.
For an $A$-module $B$ and an ideal $I \subset A$,
we set $B[I]=\bigcap_{i \in I} \Ker[i: B \to B]$.   

We have
\[
\begin{array}{rl}
N[\wp]
&=
\wp^{n_1-1}/\wp^{n_1}
\oplus
\cdots 
\oplus
\wp^{n_r-1}/\wp^{n_r}
\\
&\cong
\kappa(\wp)
\oplus \dots \oplus
\kappa(\wp),
\end{array}
\]
where $\kappa(\wp)=A/\wp$.

Let $H = H_{(m_{i,j})} \cap
\Aut_A(N)$ be an admissible subgroup for
some direct sum decomposition
$N=A_1 \oplus \dots \oplus A_r$
where 
$1 \le r \le d,
A_i=A/\wp^{n_i},
1 \le n_i,
1 \le i \le r$. 
Set 
\[
K=\Image
[H \subset \Aut_A(N) 
\to
\Aut_A(N[\wp])]
\]
where the arrow is the canonical map.
Let $S$ denote the set of pairs $(i,j)$ of integers
with $1 \le i,j \le r$ satisfying $m_{i,j} \neq  n_j - n_i$.
Since the composite
$$
\wp^{m_{i,j}}/\wp^{n_j} \inj \wp^{n_{i,j}}/\wp^{n_j}
\cong \Hom_A(A_i,A_j) \to \Hom_A(A_i[\wp],A_j[\wp])
$$
is the zero map if and only if
$(i,j) \in S$, the group
$K$ is identified with the set of
invertible $r \times r$ matrices $B$
with coefficients in $A/\wp$ such that 
for any integers $i,j$ with $1 \le i,j \le r$, 
the $(i,j)$-th entry
of $B-1_r$ is equal to zero when $(i,j) \in S$.
Using that $K$ is a group, one can check that $S$ satisfies
the following property: if $(i,j),(j,k) \in S$ then
$(i,k) \in S$.
For $i,j \in \{1,\ldots,r\}$, let us write
$i \sim j$ if either $i=j$ or ($(i,j), (j,i) \in S$).
It is easy to see that $\sim$ gives an equivalence relation
on the set $\{1,\ldots,r\}$.
For $i \in \{1,\ldots,r\}$, the equivalence class of $i$
will be denoted by $\overline{i}$.
As is easily seen, this equivalence relation has the following
property, which we will use later: if $i \in \{1,\ldots,r\}$ satisfies
$(i,i) \not\in S$, then $\overline{i}$ is the singleton $\{i\}$.
Let us consider the quotient set $\{1,\ldots,r\}/\sim$
under this equivalence relation.
For $i,j \in \{1,\ldots,r\}$, we write $\overline{i} \le \overline{j}$
if either $i\sim j$ or $(i,j) \in S$ is satisfied. 
The property of $S$ mentioned above implies
that this condition depends only on the classes 
$\overline{i}$, $\overline{j}$ of $i$, $j$, and
the relation $\le$ gives a partial order on the set $\{1,\ldots,r\}/\sim$.
Let us choose a total order on $\{1,\ldots,r\}/\sim$
extending this partial order and write
$\{1,\ldots,r\}/\sim \, = \{R_1, \ldots, R_u \}$,
$R_1 < \cdots < R_u$.
For $s=1,\ldots,u$, let $d_s$ denote the cardinality of
the subset $R_s \subset \{1, \dots, r \}$.
By permuting the elements $1,\ldots,r$ if necessary, we may and will
assume that $R_1, \ldots, R_u$ satisfy the following condition:
\begin{itemize}
\item[(**):]
For $s=1,\ldots,u$, the set $R_s$ is equal to the set of integers
$i$ satisfying $d_1+ \cdots + d_{s-1} < i \le  d_1 + \cdots + d_s$.
\end{itemize}
For $i=0,\ldots,u$ set
$F_s = \bigoplus_{i=1}^{d_1+\cdots + d_s} \wp^{n_i-1}/\wp^{n_i} 
\subset N[\wp]$.
This gives an increasing filtration of $N[\wp]$
as $\kappa(\wp)$-vector space:
\[
\{0\} = F_0 \subsetneq F_1
\subsetneq \cdots
\subsetneq F_u = N[\wp].
\]
Let 
\[
P_{F_\bullet}
=
\{
g \in 
\Aut_{\kappa(\wp)}
(N[\wp])
\ |\ 
g(F_i) =F_i
\text{ for all $1 \le i \le r$}.
\}
\]
Then $K$ has the following property:
\[
(*2): K \subset P_{F_\bullet}.
\]

For $ 1\le i \le u$, 
let 
$L_i=
\Aut_{\kappa(\wp)}
(F_i/F_{i-1})
$
and regard them as quotients of 
$P_{F_\bullet}$.

Let $R$ denote the set of integers
$s \in \{1,\dots,u\}$ such that
any $i,j \in R_s$ satisfies $(i,j) \in S$.
Then $K$ satisfies the following property:
\[
(*3):
\Image
\left[
K \subset P_{F_\bullet}
\to 
\prod_{1 \le i \le u} L_i
\right]
=
\prod_{i \in R} 
L_i,
\]
where 
$\prod_{i \in R} 
L_i$ is the trivial group
if $R= \emptyset$.

\subsection{Examples: 
$\Gamma_0$ and $\Gamma_1$}
Let $I \subsetneq A$ be a nonzero ideal.
Let $d \ge 1$.
Let $N=(A/I)^d$.
We consider the subgroup
$\Gamma_0$ (\resp $\Gamma_1$)
of $GL_d(A/I)$ consisting of elements $(a_{ij})_{1 \le i, j \le d}$
such that
\[
(a_{d, 1}, \dots, a_{d,d-1}) \equiv (0, \dots, 0) \mod I
\]
\[
\text{(\resp }(a_{d, 1}, \dots, a_{d,d}) \equiv (0, \dots, 0,1) \mod I).
\]
Then $\Gamma_0$ and $\Gamma_1$ belong to $\cS_N$.

\subsection{Examples: Parabolic subgroups}
\label{sec:parabolic}
Let $N=(A/I)^d$.
Let $\bd=(d_1, \dots, d_r)$ be a partition of 
$d=d_1+\dots+d_r$.
There is an associated parabolic subgroup 
$P_\bd \subset \GL_d(A/I)$.
Then $P_\bd$ is admissible.
\section{Main Theorem and its application}
\label{sec:main}
Let us state our main theorem and corollaries in this section.
The proof will be given in Section~\ref{sec:proof}.
\subsection{}
\begin{thm}
\label{thm:main}
Let $d \ge 1$.
Let $N$ be a torsion $A$-module generated by at most $d$ elements.
Suppose 
$N=N_1 \oplus N_2$
for some nonzero $N_1$ and $N_2$
such that 
$\Supp N_1 \cap \Supp N_2=\emptyset$.
Let $U \subset \Spec A$ be an open subscheme 
such that 
the pair $(N_2, U)$ satisfies Assumption (1) or (2) 
of Proposition~\ref{prop:level N moduli}.  
Let $H \subset \Aut_A(N_1)$ 
be a subgroup that belongs to 
$\cS_{N_1}$,
which is regarded as a 
subgroup of $\Aut_A(N)$.
Then, the quotient 
\[
\cM_{N, U}^d/H
\]
is regular.
\end{thm}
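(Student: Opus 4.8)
The plan follows the Katz--Mazur strategy \cite[Thm.~7.5.1]{KM} sketched in the introduction, the new ingredient being Dickson's theorem \cite{Dickson} at the Levi factors. Regularity is local, so fix a closed point $y$ of $\cM_{N,U}^d/H$. Since $H$ is finite and $\cM_{N,U}^d$ is affine of finite type over $U$, the quotient is excellent, $\cM_{N,U}^d$ is finite over it, and the completed local ring at $y$ is $(\widehat{\cO}_x)^{H_x}$ for a closed point $x$ over $y$ with stabilizer $H_x \subseteq H$ (no further descent is needed, the relevant residue fields being algebraically closed). By Proposition~\ref{prop:level N moduli}, $\cM_{N,U}^d$ is regular, since $(N_2,U)$---hence also $(N,U)$---satisfies assumption (1) or (2). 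If the characteristic $\wp$ of the Drinfeld module at $x$ does not lie in $\Supp N_1$, then every level-$N_1$ structure is a monomorphism near $x$, so $\Aut_A(N_1)$ and a fortiori $H$ act freely there; thus $\cM_{N,U}^d/H$ is \'etale over the regular scheme $\cM_{N,U}^d$ near $y$, hence regular at $y$. So we may assume $\wp \in \Supp N_1$; then $\wp \notin \Supp N_2$, and writing $H = H_\wp \times H'$ with $H' = \prod_{\wp' \neq \wp} H_{\wp'}$ and $H_\wp, H_{\wp'}$ the factors of Section~\ref{sec:admissible}, the group $H'$ acts freely near $x$. It therefore suffices to show $(\widehat{\cO}_x)^{(H_\wp)_x}$ is regular, where $\widehat{\cO}_x \cong E_{(G,g)}$ is the deformation ring of the $\wp$-divisible $\cO$-module $(G,g)$ attached to $x$, and where, since $\wp \notin \Supp N_2$, the level structure involved is one of level $M := (N_1)_\wp$.

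By Propositions~\ref{prop:divisible} and~\ref{prop:formal}, if $h$ is the height of the formal part of $(G,g)$ and $j = d-h$, then $\widehat{\cO}_x \cong D_{M'}[[d_1,\dots,d_j]]$ for an $(H_\wp)_x$-stable submodule $M' \subseteq M$, the variables $d_i$ being fixed by $(H_\wp)_x$; hence $(\widehat{\cO}_x)^{(H_\wp)_x}$ is regular if and only if $D_{M'}^{(H_\wp)_x}$ is. This condition depends only on $h$, so --- since the non-regular locus of $\cM_{N,U}^d/H$ is closed and, by Section~\ref{sec:existence}, the (most) supersingular point lies in the closure of the set of points of any prescribed height --- it is enough to treat the supersingular point. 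There $(G,g)$ has purely connected $\wp$-divisible module, its unique level structure is $\psi = 0$, it is fixed by all of $\Aut_A(M)$, and its completed local ring is the ring $D := D_M$ of Proposition~\ref{prop:formal}, with a regular system of parameters given by the images $\psi(e_1),\dots,\psi(e_r)$ of a minimal generating set of $M = \bigoplus_i A/\wp^{n_i}$ under the universal level structure together with the remaining $t_i$. It remains to prove that $D^{H_\wp}$ is regular.

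An element $\gamma \in H_\wp \subseteq \Aut_A(M)$ acts on $D$ by $\psi \mapsto \psi \circ \gamma$, sending each $\psi(e_i)$ to a power series over $D_0$ in the $\psi(e_j)$ whose reduction modulo $\wp$ has linear term the action of $\gamma$ on $M[\wp]$. I would now carry out the descent using the anatomy of admissible subgroups from Section~\ref{sec:admissible}: the exact sequence $1 \to V \to H_\wp \to K \to 1$ with $V = \Ker(H_\wp \to \Aut_A(M[\wp]))$, $K \subseteq P_{F_\bullet}$, and $\Image(K \to \prod_i L_i) = \prod_{i \in R} L_i$. The group $V$ together with the unipotent radical of $P_{F_\bullet}$ acts on the parameters $\psi(e_i)$ by triangular substitutions --- modifying a given parameter by a unit multiple of itself plus terms in the others of lower filtration --- so successive application of Proposition~\ref{prop:KM} shows that passing to invariants under these preserves regularity and only replaces $\psi(e_1),\dots,\psi(e_r)$ by a new regular system of parameters. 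One is thereby reduced to forming invariants under each Levi factor $L_s \cong \GL_{d_s}(\kappa(\wp))$ acting $\kappa(\wp)$-linearly, modulo higher-order terms, on its block of parameters; here Theorem~\ref{thm:Dickson} shows that the ring of invariants is again regular, the Dickson invariants forming a regular system of parameters. Composing these steps gives the regularity of $D^{H_\wp}$ and completes the proof.

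The main obstacle is the last step: one must control the higher-order contributions to $f_\pi(\theta)$ in the inductive presentation of $D = D_M$ (Proposition~\ref{prop:formal}) so that at each stage of the descent the induced action on the updated system of parameters has precisely the shape demanded by Proposition~\ref{prop:KM}, and is genuinely $\kappa(\wp)$-linear on the relevant block at the steps where Dickson's theorem is applied. By contrast the preliminary reductions --- the \'etale argument away from $\Supp N_1$ and the reduction to maximal height --- are routine given Propositions~\ref{prop:level N moduli},~\ref{prop:divisible},~\ref{prop:formal} and Section~\ref{sec:existence}.
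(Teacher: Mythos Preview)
Your strategy is exactly the paper's, and your reductions (to a single prime $\wp$, to the supersingular point via the closed-singular-locus argument) are right. The gap is precisely what you flag as the ``main obstacle'', and the paper supplies two concrete ingredients you have not found. First, before any local computation it reduces to the \emph{standard case} $N_1=(A/\wp^n)^d$: one embeds $M=(N_1)_\wp$ into $\wt N=(A/\wp^n)^d$, replaces $H_\wp$ by $\wt H=\{g\in\Aut_A(\wt N):g(M)=M,\ g|_M\in H_\wp\}$ (still admissible), and uses a normalization argument (Lemma~\ref{lem:h_isom}: both sides are normal and agree over $U\setminus\{\wp\}$) to identify $\cM^d_{M',U}$ with a quotient of $\cM^d_{\wt N',U}$. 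This eliminates the leftover $t_i$'s and makes the deformation ring $D_{N_1}$ have exactly the $d$ parameters $\psi(e_1),\dots,\psi(e_d)$.

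Second---and this is the point that dissolves your obstacle---the paper never works with the bare $\psi(e_i)$. For each $A$-submodule $M\subset N_1$ it sets $f^M(x)=\prod_{\alpha\in M}(x-\psi(\alpha))\in D_{N_1}[x]$, which is an \emph{additive} polynomial satisfying $(f^M(y))\cdot g=f^{Mg}(yg)$ and $z\,f^M(s)=f^M(zs)$ for $z\in\kappa(\wp)$. Taking $J=\Ker[H\to\prod_i L_i]$ and the refined filtration $J^{k,l}$ of Section~\ref{sec:proof}, one proves inductively (Proposition~\ref{prop:Hkl}) that $(D_{N_1})^{J^{k,l}}$ is regular with parameters $f^{J_i^{k,l}}(e_i)$; the hypotheses of Proposition~\ref{prop:KM} are verified \emph{exactly} at each step, not just modulo higher terms. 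After taking $J$-invariants, the additivity and $\kappa(\wp)$-homogeneity of $f^{J_*}$ force the action of each Levi block $L_s\cong\GL_{d_s}(\kappa(\wp))$ on its block of parameters $f^{J_*}(e'_a)$ to be \emph{genuinely} $\kappa(\wp)$-linear, so Corollary~\ref{cor:Dickson} applies directly. Your phrase ``$\kappa(\wp)$-linearly, modulo higher-order terms'' would not suffice: passing from a linear action on the associated graded to regularity of the completed invariant ring is exactly the step that fails in general, and the additive-polynomial choice of parameters is what makes it go through here. (Incidentally, your sentence ``$\cM_{N,U}^d/H$ is \'etale over $\cM_{N,U}^d$'' has the direction reversed; the quotient map is \'etale, hence the quotient is regular.)
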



\begin{rmk}
The following case is not covered by Theorem~\ref{thm:main}.
Let $N$ be a torsion $A$ module
generated by at most $d$ elements
such that $\Supp N=\{\wp\}$ for 
some nonzero prime ideal $\wp$.
Let $U \subset \Spec A \setminus \Supp N$ 
be an open subscheme.
Then $\cM^d_{N, U}$ is representable
by Proposition \ref{prop:level N moduli}.
Take an admissible subgroup $H \in \cS_N$.
Then Theorem~\ref{thm:main} does not refer
to the quotient
$\cM^d_{N,U}/H$.
We think our proof will work if there exists 
an $A$-submodule $N_0 \subset N$
such that $N_0$ is $H$-invariant.
If there does not exist such an $A$-submodule,
we
do not know if the quotient is regular.
\end{rmk}

\subsection{}
Let $N_1', N_1'', N_2$ be torsion $A$-modules.
Suppose $N_1' \oplus N_2$ 
and $N_1'' \oplus N_2$
are generated by at most $d$ elements.
Assume that $\Supp N_1' \cap \Supp N_2=\emptyset$
and $\Supp N_1'' \cap \Supp N_2=\emptyset$.
Let $U \subset \Spec A$ be an open subscheme
such that the pair $(N_2, U)$
satisfies Assumption (1) or (2)
of Proposition~\ref{prop:level N moduli}.


\subsubsection{}
\label{sec:surjection}
Suppose we are given a  
surjective morphism of $A$-modules
$f: N'_1 \to N''_1$.
We write $f':N'_1 \oplus N_2 \to N''_1 \oplus N_2$
for the induced surjection.
Consider the functor that sends
a Drinfeld module over a scheme $S$
with structure of level $N'_1 \oplus N_2$ 
$(E \to S, \psi:N'_1 \oplus N_2 \to\Hom(S, E))$
to that with level $N''_1\oplus N_2$
$(E/\psi(N'_1 \oplus N_2) \to S,
\psi': N''_1 \oplus N_2 
\to \Hom(S, E/\Ker\, f'))$
where $\psi'(x)=\psi(y) \mod \Ker\, f'$
for any lift $y$ of $x$.
We denote the induced morphism
\[
m_f: \cM_{N'_1\oplus N_2, U}^d 
\to \cM_{N''_1 \oplus N_2, U}^d.
\]
As the morphism is finite (cf.\ \cite[p.8, (1.4.2)]{Laumon}) 
and both target and source are 
regular, we deduce that $m_f$ is flat
using \cite[V, p.95, 3.6]{AK}.

Let $H \subset \Aut_A(N'_1)$ be 
a subgroup.   Suppose $H$ is admissible,
i.e., $H \in \cS_{N'_1}$.   
We regard $H$ as a subgroup of 
$\Aut_A(N'_1 \oplus N_2)$
by letting it act trivially on $N_2$.
Let $H$ act on $N''_1$ trivially
and assume that $f'$ is a 
$H$-equivariant map.
Then the morphism $m_f$
factors as
$\cM_{N'_1\oplus N_2, U}^d\to 
\cM_{N'_1 \oplus N_2 , U}^d/H
\xto{h} \cM_{N''_1 \oplus N_2, U}^d$.
\begin{cor}
\label{cor:surjection}
The morphism $h$ is finite and flat.
\end{cor}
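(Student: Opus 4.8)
The plan is to deduce everything from the regularity already established and from the factorization $m_f = h \circ g$ with $g$ the quotient map. First I would observe that the morphism $m_f \colon \cM_{N_1,U}^d \to \cM_{N_2,U}^d$ is finite: on the level of moduli it forgets part of a level-$N_1$ structure and replaces $E$ by a quotient isogenous Drinfeld module, so it is quasi-finite and proper, hence finite. Since $g$ is a finite surjection onto the quotient (the quotient of an affine scheme by a finite group acting on it), the map $h$ is itself finite: $h \circ g = m_f$ is finite and $g$ is a finite surjection, so $h$ is finite (affineness and integrality descend along the surjection $g$). Both source $\cM_{N_1,U}^d/G$ and target $\cM_{N_2,U}^d$ are regular --- the former by Theorem~\ref{thm:main} applied to $N = N_1 \oplus N_2$ with the roles arranged so that $G \in \cS_{N_1}$, wait; more precisely $\cM_{N_1,U}^d/G$ is regular because $G$ is admissible and one applies the main theorem (or, if $N_1$ itself has support meeting one point, the hypothesis on $U$ guarantees we are in the regular range of Proposition~\ref{prop:level N moduli}); the latter is regular by Proposition~\ref{prop:level N moduli} since $(N_2,U)$ satisfies assumption (1) or (2).

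With both schemes regular and $h$ finite, flatness of $h$ follows from the miracle flatness criterion exactly as in the argument already used for $m_f$: a finite morphism between regular schemes of the same dimension is flat. Concretely I would cite \cite[V, p.95, 3.6]{AK}, the same reference invoked just above for $m_f$, after checking that source and target are equidimensional of the same dimension --- which holds because $h$ is finite and both are regular excellent schemes flat of relative dimension $d-1$ over $U$ (this last fact is part of the representability statements). So the two ingredients are: (i) $h$ is finite; (ii) source and target of $h$ are regular of the same dimension; then (i)+(ii) give flat.

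I expect the only genuine point requiring care --- the ``main obstacle'', though it is mild --- to be the verification that $h$ is finite, i.e.\ that finiteness of $m_f = h \circ g$ together with finite surjectivity of $g$ forces finiteness of $h$. This is a descent statement: $g$ is an integral surjection, $\cM_{N_1,U}^d/G \to \cM_{N_2,U}^d$ becomes integral and of finite type after the faithfully-(well, just surjective integral) base change along $g$, hence is itself integral and of finite type, hence finite. One should phrase this using that $\cO_{\cM_{N_1,U}^d/G}$ is, locally, the ring of $G$-invariants and that a section over the quotient that is integral after pulling back to $\cM_{N_1,U}^d$ is already integral downstairs. The rest --- equidimensionality, the appeal to \cite{AK} for flatness, regularity of both ends --- is routine given the results cited above.
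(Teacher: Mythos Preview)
Your proposal is correct and follows essentially the same approach as the paper: show $h$ is finite, note that source and target are regular of the same dimension, and invoke \cite[V, p.95, 3.6]{AK}. The only difference is that the paper dispatches finiteness of $h$ more quickly than your descent argument: since all schemes are affine and $\cM^d_{N_2,U}$ is noetherian, the coordinate ring of $\cM^d_{N_1,U}/G$ sits as an $R_2$-submodule of the finite $R_2$-module $R_1$ (where $R_i$ is the coordinate ring of $\cM^d_{N_i,U}$), hence is itself finite over $R_2$.
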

\begin{proof}
The morphism $m_f$ is finite.
Now, $\cM^d_{N'_1 \oplus N_2, U}/H 
\to \cM^d_{N''_1 \oplus N_2, U}$ 
is finite since 
$\cM^d_{N_2, U}$ is noetherian and 
$m_f$ is finite.
By Theorem~\ref{thm:main}, $\cM^d_{N'_1 \oplus N_2, U}/H$ is regular.
We use the fact (\cite[V, p.95, 3.6]{AK}) that 
a finite morphism between regular schemes of the same dimension
is flat to conclude.  
\end{proof}

\subsubsection{}
\label{sec:injection}
There is an analogous corollary for injections.
Let $N'_1, N''_1, N_2$, and $U$ as above.
Suppose we are given an injective morphism
$f:N''_1 \to N'_1$ of $A$-modules.
Let $f':N''_1 \oplus N_2
\to N'_1 \oplus N_2$
be the induced map.
Consider the functor that sends
a Drinfeld module over a scheme $S$
with structure of level $N'_1 \oplus N_2$ 
$(E \to S, \psi:N'_1 \oplus N_2 \to\Hom(S, E))$
to that with level $N''_1\oplus N_2$
$(E \to S, \psi': N''_1 \oplus N_2 \to \Hom(S, E))$
where $\psi'$ is the restriction of $\psi$.
We let \[
r_f: \cM_{N'_1\oplus N_2, U}^d 
\to \cM_{N''_1 \oplus N_2, U}^d.
\]
denote the induced morphism
of schemes.

Let $H \in \cS_{N'_1}$
be an admissible subgroup.
Let $H$ act on $N_2$ and $N''_1$
trivially and assume
$f': N''_1 \oplus N_2
\to
N'_1 \oplus N_2$
is $H$-equivariant.
Then the morphism $r_f$ factors as
$\cM_{N'_1\oplus N_2, U}^d\to 
\cM_{N'_1 \oplus N_2 , U}^d/H
\xto{h} \cM_{N''_1 \oplus N_2, U}^d$.
\begin{cor}
\label{cor:injection}
The morphism $h$ is finite and flat.
\end{cor}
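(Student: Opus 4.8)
The plan is to follow the proof of Corollary~\ref{cor:surjection} almost verbatim, with the level-lowering restriction map $r_f$ playing the role of the quotient map $m_f$. First I would make $r_f$ explicit: it sends a Drinfeld module $(E \to S, \psi\colon N_1 \to \Hom(S,E))$ of rank $d$ with structure of level $N_1$ over a $U$-scheme $S$ to $(E \to S, \psi \circ f)$. The composite $\psi \circ f\colon N_2 \to \Hom(S,E)$ is indeed a structure of level $N_2$: for $a \in A$, injectivity of $f$ gives an injection $\Ker(a\colon N_2 \to N_2) \hookrightarrow \Ker(a\colon N_1 \to N_1)$, so the effective Cartier divisor $\sum_{x \in \Ker(a\colon N_2 \to N_2)} (\psi \circ f)(x)$ is a subdivisor of $\sum_{y \in \Ker(a\colon N_1 \to N_1)} \psi(y)$, which by hypothesis is a closed subscheme of $E[a]$; hence so is the subdivisor. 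This defines $r_f$, and it is finite by the same argument that shows $m_f$ is finite. Since $f$ is $G$-equivariant and $G$ acts trivially on $N_2$, each $g \in G$ fixes $f(N_2)$ pointwise, whence $(\psi \circ g^{-1}) \circ f = \psi \circ f$; thus $r_f$ is $G$-invariant and factors as $\cM_{N_1, U}^d \xto{g} \cM_{N_1, U}^d/G \xto{h} \cM_{N_2, U}^d$, with $g$ the quotient map.

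Next I would deduce finiteness and flatness of $h$ exactly as in Corollary~\ref{cor:surjection}. The scheme $\cM_{N_2, U}^d$ is noetherian (Proposition~\ref{prop:level N moduli}) and $r_f = h \circ g$ is finite with $g$ surjective, so $h$ is finite. For flatness, $\cM_{N_2, U}^d$ is regular by Proposition~\ref{prop:level N moduli}, $\cM_{N_1, U}^d/G$ is regular by Theorem~\ref{thm:main}, and the two schemes are equidimensional of the same dimension (each of $\cM_{N_1, U}^d$, $\cM_{N_2, U}^d$ has dimension $\dim U + d - 1$ by the moduli description, and $g$ is finite surjective). Then \cite[V, p.95, 3.6]{AK}, asserting that a finite morphism between regular schemes of the same dimension is flat, gives that $h$ is flat.

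The step I expect to be the \emph{main obstacle} is securing the regularity of $\cM_{N_1, U}^d/G$ so that \cite[V, p.95, 3.6]{AK} applies: this is precisely the content of the main theorem, and one must check that its hypotheses are in force here, i.e. that $N_1$ carries a direct-sum decomposition with disjoint supports whose relevant summand has $G$ as an admissible subgroup. Granting that, everything else is the well-definedness verification for $r_f$ above together with the formal dimension and finiteness bookkeeping.
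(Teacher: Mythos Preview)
Your proposal is correct and follows exactly the approach the paper intends: the paper's own proof consists of the single sentence ``The proof is analogous to the proof of the previous corollary, hence omitted,'' and your argument is precisely that analogy spelled out---finiteness of $h$ from noetherianity of the target plus finiteness of $r_f$, then flatness from \cite[V, p.95, 3.6]{AK} applied to a finite morphism between regular schemes of the same dimension, the regularity of $\cM_{N_1,U}^d/G$ being supplied by Theorem~\ref{thm:main}. Your added verifications (well-definedness of $r_f$ and its $G$-invariance) are details the paper leaves implicit.
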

\begin{proof}
The proof is analogous to the proof of Corollary~\ref{cor:surjection}, 
hence omitted.
\end{proof}

\subsection{Application: Hecke operators on higher Chow groups}
\label{sec:application}
With Corollaries~\ref{cor:surjection} and~\ref{cor:injection}
of previous section, we obtain the following 
description of Hecke operators on higher Chow groups 
of Drinfeld modular schemes over $A$.

\subsubsection{Hecke operators as finite correspondences}
Let $d \ge 1$.
Let $I \subsetneq A$ be a nonzero ideal.
Let $\wp \subset A$ be a prime ideal
which is prime to $I$.
Take $U$ to be an open subscheme of 
$\Spec A \setminus \Spec (A/I)$
if $|\Supp A/I|=1$ and 
$U$ to be any open subscheme of $\Spec A$
otherwise.

Set $N_{0,k}=(A/\wp)^k, N_2=(A/I)^d$
and $N_{1,k}=N_2 \oplus N_{0,k}$ 
for $0 \le k \le d$.
By Proposition~\ref{prop:level N moduli},
the functors
$\cM_{N_{1,k}, U}^d$ and 
$\cM_{N_2, U}^d$ 
are representable by regular schemes.

Let $G_k=\Aut_A(N_{0,k})$. 
Then, $G_k \in \cS_{N_{1,k}}$.
Regard $G_k$
as a subgroup of $\Aut_A(N_{1,k})$
with the identity on the direct factor $N_2$.


Let
$f_k: N_2 \to N_{1,k}$
be the canonical injection into the direct summand for each $k$.
As in Section~\ref{sec:injection}, we obtain a morphism
$r_{f_k} : \cM^d_{N_{1,k}, U} \to \cM^d_{N_2, U}$
which factors as
\[
\cM^d_{N_{1,k}, U}
\to
\cM^d_{N_{1,k}, U}/G_k
\xto{\overline{r}_{f_k}}
\cM^d_{N_2,U}.
\]
We denote by $\overline{r}_{f_k}$
the second morphism.

Let 
$g_k: N_{1,k} \to N_2$
denote the canonical surjection onto the direct summand for
for each $k$.
As in Section~\ref{sec:surjection},
we obtain a morphism
$m_{g_k} : \cM^d_{N_{1,k}, U} \to \cM^d_{N_2, U}$,
which factors as 
\[
\cM^d_{N_{1,k}, U}
\to
\cM^d_{N_{1,k}, U}/G_k
\xto{\overline{m}_{g_k}}
\cM^d_{N_2,U}.
\]
Let us denote by 
$\overline{m}_{g_k}$
the second morphism.

As we have seen in 
the proofs of Corollaries~\ref{cor:injection} and~\ref{cor:surjection},
without using the main results of this article, 
we know that the morphisms
$\overline{m}_{g_k}$ and $\overline{r}_{f_k}$
are finite.
Therefore 
the diagram
$\cM^d_{N_2, U} \xleftarrow{\overline{m}_{g_k}} \cM^d_{N_{1,k}, U}/G_k 
\xrightarrow{\overline{r}_{f_k}} \cM^d_{N_2, U}$
defines a finite correspondence in 
the sense used in \cite[p.142, Thm 17.21]{MVW}.
The action of this finite correspondence
on the higher Chow group
$\CH^*(\cM^d_{N_2,k}, *)$
is denoted by $T_{\wp, k}$
and we define this to be the $k$-th 
Hecke operator at $\wp$.

\subsubsection{}
As an application of our theorem, we can express 
Hecke operators as composition of pullback and pushforward
as follows.
As seen in Corollaries~\ref{cor:surjection} and~\ref{cor:injection},
the morphisms 
$\overline{r}_{f_k}$
and 
$\overline{m}_{g_k}$
are finite and moreover flat.
It follows that 
the graphs of 
$\overline{r}_{f_k}$
and 
$\overline{m}_{g_k}$
are finite correspondences,
and one can check that 
the composition equals $T_{\wp, k}$.
That is, we have
\[
T_{\wp, k}=(\overline{m}_{g_k})_*(\overline{r}_{f_k})^*: 
\CH^*(\cM_{N_2, U}^d,*) \to 
\CH^*(\cM_{N_2, U}^d,*),
\]
for each $0 \le k \le d$,
where upper star is the pullback and lower star is the pushforward.



\section{Existence of supersingular points}
\label{sec:existence}
In the proof of our main result, 
we use that there exists a supersingular point
at any $\wp$.
The aim of this section is to give a proof of that fact.

\subsection{}
Let $I \subset A$ be a nonzero ideal such that 
$|\Spec (A/I)| \ge 2$.
Let $\wp \subset A$ be a nonzero prime ideal 
and let $\kappa(\wp)=A/\wp$.
Let $\cM_{I,A}^d$ denote 
the moduli functor of full level $I$ Drinfeld modules of rank $d$.
The subscript $A$ indicates that the moduli is a functor from the 
category of $A$-scheme and it is representable by a scheme because of
the condition on $I$.
(We view it as a scheme.)
\begin{lem}
\label{lem:valued point}
We have $\cM_{I,A}^1(\overline{\kappa(\wp)}) \neq \emptyset$.
\end{lem}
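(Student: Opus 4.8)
The claim is that the rank-$1$ Drinfeld modular scheme of full level $I$ (with $|\Spec(A/I)| \ge 2$, so that it is representable) has an $\overline{\kappa(\wp)}$-point. The strategy is to construct such a point by hand, starting from a rank-$1$ Drinfeld module over $\overline{\kappa(\wp)}$ and equipping it with a full level $I$ structure, which exists because over an algebraically closed field the relevant torsion scheme is \'etale of the expected order.

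\medskip

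\noindent\emph{Step 1: existence of a rank-$1$ Drinfeld module over $\overline{\kappa(\wp)}$.} Pick any ring homomorphism $A \to \overline{\kappa(\wp)}$ (e.g.\ the one through $A/\wp = \kappa(\wp)$). A rank-$1$ Drinfeld $A$-module over a field $L$ is a ring homomorphism $\phi : A \to L\{\tau\}$ (twisted polynomials) with $\phi_a = \iota(a) + (\text{higher }\tau\text{-terms})$ and $\deg_\tau \phi_a = \deg(a)$ for the chosen sign. The existence of such a $\phi$ over $\overline{\kappa(\wp)}$ is classical; concretely one can use that $A$ is a Dedekind domain, so its ideal class group is finite, and a rank-$1$ Drinfeld module corresponds (via the Drinfeld–Hayes theory, or directly over an algebraically closed field by a dimension count on coefficients) to such data. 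Over an algebraically closed field one has the cleanest statement: rank-$1$ Drinfeld modules exist and form a finite nonempty set of isomorphism classes. I would simply cite Drinfeld \cite{Drinfeld} (or Goss's book) for this.

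\medskip

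\noindent\emph{Step 2: adjoining a full level $I$ structure.} Given $\phi$ over $L = \overline{\kappa(\wp)}$, the $I$-torsion scheme $E[I] = \bigcap_{a \in I}\Ker(\phi_a)$ is a finite group scheme over $L$. Because $\wp \nmid I$ (we may assume this: if $\wp \mid I$ the point we want lies on the bad fiber, but the lemma as stated is about $\overline{\kappa(\wp)}$-points and in fact the argument is needed in the supersingular reduction analysis — here I would treat the generic case, and note $I$ can be chosen coprime to $\wp$ since we only need existence at $\wp$ and the moduli map is a limit over $I$), the group scheme $E[I]$ is \'etale over $L$, hence, $L$ being algebraically closed, $E[I](L) \cong (A/I)$ as $A$-modules (this is the rank-$1$ analogue of $E[I] \cong (A/I)^d$; it follows from the fact that $\phi_a$ is a separable additive polynomial of degree $|A/aA|$ when $a$ is prime to the characteristic, combined with the elementary-divisor structure of $A/I$). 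A full level $I$ structure is then an $A$-module isomorphism $I^{-1}/A \xrightarrow{\sim} E[I](L)$, which exists since both sides are isomorphic to $A/I$. This gives a point of $\cM_{I,A}^1(L)$.

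\medskip

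\noindent\textbf{The main obstacle.} The only subtle point is handling the relationship to $\wp$: one needs $\phi$ to be defined over $\overline{\kappa(\wp)}$ (harmless, take any $A$-algebra structure on that field) and one needs the level $I$ structure to exist, which requires $E[I]$ to be \'etale, i.e.\ $I$ prime to the residue characteristic $\wp$ — or, if $\wp \mid I$, a more careful argument via the Drinfeld level structure (Cartier divisor) definition rather than honest torsion points. Since the ambient context (Section~\ref{sec:existence}) only needs \emph{some} supersingular point at $\wp$ and the level can be chosen coprime to $\wp$ throughout, I expect the clean case suffices; the write-up should just make the coprimality explicit or invoke the valuative/limit structure of the tower. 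Everything else is a routine unwinding of definitions plus the classical existence of rank-$1$ Drinfeld modules.
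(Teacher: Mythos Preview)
Your approach is genuinely different from the paper's. The paper argues by transcendental methods and reduction: analytic uniformization over $C_\infty$ gives $\cM^1_{I,A}(C_\infty)\neq\emptyset$, hence $\cM^1_{I,A}(\overline{F})\neq\emptyset$, hence a point over some finite extension $L/F$; completing at a place over $\wp$ and invoking Drinfeld's potential-good-reduction result \cite[Prop.~7.1]{Drinfeld} yields a point over a finite residue field, hence over $\overline{\kappa(\wp)}$. You instead propose to build a rank-$1$ Drinfeld module directly over $\overline{\kappa(\wp)}$ and then attach the level structure by hand.

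Two remarks. First, your Step~1 is where the actual content lies, and you treat it as a black box. If you fill it via Hayes' theory of sgn-normalized rank-$1$ modules, you get a genuinely independent proof; if you fill it by ``take a rank-$1$ module over $\overline{F}$ and reduce,'' you are essentially reproducing the paper's argument with the pieces rearranged (and you will still need a good-reduction statement like Drinfeld's Prop.~7.1). Either way you should say explicitly which route you take. Second, your restriction to $I$ prime to $\wp$ proves a weaker statement than the lemma asserts. You are right that this suffices for the application in Section~\ref{sec:existence} (the auxiliary ideal $I'$ there can be chosen prime to $\wp'$), but the fix for general $I$ is cheap and worth including: on the $\wp$-primary part the zero map is a valid Drinfeld level structure, since in rank $1$ over characteristic $\wp$ the module is supersingular and $E[\wp^n]$ equals the divisor $q_\wp^n\cdot[0]$.
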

\begin{proof}
Let $F_\infty$ be the completion at $\infty$
of $F$.
By \cite[Corollary, p.570]{Drinfeld}, 
there exists a Drinfeld module $E$ 
of rank 1 over 
$F_\infty^s$ where
$F_\infty^s$ is a separable closure of $F_\infty$.
As $F_\infty^s$ is separably closed, the $I$-torsion
points $E[I]$ of $E$ is isomorphic over 
$F_\infty^s$ to the constant 
$A$-module scheme $A/I$.   
A choice of an isomorphism
gives a level $I$ structure, thus we
see that $\cM_{I,A}^1(F_\infty^s)\neq 0$.
Since 
$\cM_{I, A}^1 \times_A F$ is an $F$-scheme
and $\overline{F} \subset \overline{F_\infty}$,
it follows that  
$\cM_{I,A}^1(\overline{F}) \neq \emptyset$.
Take a finite extension $L/F$ such that
$\cM_{I,A}^1(L) \neq \emptyset$.
Take a place $\wp_L$ over $\wp$.
Let $L_{\wp_L}$ denote the completion
of $L$ at $\wp_L$.
We have $\cM_{I,A}^1(L_{\wp_L}) \neq \emptyset$
using the canonical map $\Spec L_{\wp_L} \to \Spec L$.
Then, by \cite[Proposition 7.1, p.584]{Drinfeld},
there exists a finite extension $R$ of $L_{\wp_L}$
such that 
$\cM_{I, A}^1(\kappa) \neq \emptyset$ 
where $\kappa$ is the residue field of $R$.
This proves the claim.
\end{proof}
\subsection{Construction of a cover}
Let $C, \infty, A, F, F_\infty, C_\infty$ be as above.
Let $\wp \subset A$ be a nonzero prime ideal and 
$\kappa(\wp)=A/\wp$.

We construct a covering $C'$ of $C$ of degree $d$ as follows.
Let $f \in F$ be a nonzero element such
that $f$ has a zero of order 1 at each 
$\wp$ and $\infty$.
(The existence of such an $f$ can be proved 
by, for example, using the Riemann-Roch theorem.)

Set 
$F'=F[y]/(y^d-f)$
and let $C'$ be the smooth projective curve whose function field is $F'$.
Let $h: C' \to C$ denote the canonical map corresponding to
$F \subset F'$.
Then, by construction, $h$ is totally ramified at $\wp$ and $\infty$.
It follows that $h^{-1}(\infty)$ and
$h^{-1}(\wp)$ are singletons.
Let $\infty'$ and $\wp'$ denote the fibers of 
$\infty$ and $\wp$ respectively.

Set $A'=H^0(C' \setminus\{\infty'\}, \cO_{C'})$.
Recall that (e.g. as in \cite[p.33, Rmk 2.1]{DH})
we normalize the absolute values so that
$|a|_\infty=|A/(a)|$
and 
$|a'|_{\infty'}=|A'/(a')|$
for $a \in A$ and $a' \in A'$ respectively,
where $|\cdot|$ denotes the cardinality.
In particular,
we have
$|h_A(a)|_{\infty'}=|a|_\infty^d$
where $h_A: A \to A'$ is the map
induced by $h$.

\subsection{A rank 1 Drinfeld module for $(C', \infty')$ and its formal module}
\subsubsection{}
Let $I' \subset A'$ be an (auxiliary) nonzero ideal such that 
$|\Supp A'/I'| \ge 2$.
Take a nonzero prime ideal $\wp' \subset A'$.
By Lemma~\ref{lem:valued point} for $d=1$, 
we have 
$M^1_{I', A'}(\overline{\kappa(\wp')}) \neq \emptyset$.
Take $x' \in M^1_{I', A'}(\overline{\kappa(\wp')})$
and let $E'_{x'}$ denote the corresponding 
Drinfeld module over $\overline{\kappa(\wp')}$.
We write
$\varphi_{x'}: A' \to \End_{A'-group sch}(\mathbb{G}_{a,\overline{\kappa(\wp)}})$
for the corresponding ring homomorphism.
\subsubsection{}
The universal deformation of $E'_{x'}$
is computed in \cite[p.576, Section 5C]{Drinfeld}.
Let $\widetilde{E}'_{x'}$ denote the associated divisible $\widehat{A'_{\wp'}}^{ur}$-module,
and ${\widetilde{E'}_{x'}}^{loc}$ denote the connected component containing zero
(which is a formal $\widehat{A'_{\wp'}}^{ur}$-module).
This is isomorphic to the additive 
formal group with
$f_{\pi'}(x)=\pi'x+x^{|\pi'|_{\infty'}}$
where $\pi'$ is a uniformizer in $A'_{\wp'}$.
Hence the formal $\widehat{A'_{\wp'}}^{ur}$-module
associated with $E'_{x'}$ is isomorphic to the 
additive formal group with 
$f_{\pi'}(x)=x^{|\pi'|_{\infty'}}$.
\subsection{}
Using the ring homomorphism $\varphi_{x'}$, we construct a Drinfeld module
$(E, \varphi)$ for $(C, \infty)$ as follows.
Using the map $h_A: A \to A'$, we identify
$\overline{\kappa(\wp)}=\overline{\kappa(\wp')}$. 
We define a ring homomorphism $\varphi$ as the composite
\[
A \xto{h_A} A' \xto{\varphi_{x'}} 
\End_{A'-gp sch}(\mathbb{G}_{a, \overline{\kappa(\wp')}})
\to
\End_{A-gp sch}(\mathbb{G}_{a, \overline{\kappa(\wp)}}).
\]
It can be checked that this defines a Drinfeld module 
$(E, \varphi)$ for $(C, \infty)$
over $\overline{\kappa(\wp)}$.
The rank is $d$ since 
$\deg(\varphi(a))=|\varphi(a)|_{\infty'}=|a|_{\infty}^d$
for all nonzero $a$.

\begin{prop}
\label{prop:supersingular existence}
There exists a supersingular 
Drinfeld module 
(for $(C, \infty)$) 
of rank $d$
over $\overline{\kappa(\wp)}$.
\end{prop}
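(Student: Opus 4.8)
\noindent\emph{Plan of proof.} The plan is to take the Drinfeld module $(E,\varphi)$ for $(C,\infty)$ over $\overline{\kappa(\wp)}$ constructed in the preceding subsections — which we have already seen has rank $d$ — and to check that it is supersingular, i.e.\ that the formal $A_\wp$-module attached to $E$ has height $d$, the largest possible value.

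First I would record the local behaviour of the cover $h\colon C'\to C$ at $\wp$. Since $[F':F]=d$ and $h$ is totally ramified at $\wp$, the unique prime $\wp'$ of $A'$ above $\wp$ has ramification index $d$ and residue degree $1$; consequently $h_A\colon A\to A'$ induces, after completing, an embedding $A_\wp\inj A'_{\wp'}$ which is totally ramified of degree $d$ and which identifies $\overline{\kappa(\wp)}=\overline{\kappa(\wp')}$. In particular, for a uniformizer $\pi$ of $A_\wp$ and a uniformizer $\pi'$ of $A'_{\wp'}$ we have $\pi=u\,\pi'^{\,d}$ in $A'_{\wp'}$ for some unit $u\in(A'_{\wp'})^\times$.

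Next I would identify the formal module of $E$. By construction $\varphi$ is the composite $A\xto{h_A}A'\xto{\varphi_{x'}}\End(\Ga)$, and $\varphi_{x'}$ extends to an action of $A'_{\wp'}$ on the formal module attached to $E'_{x'}$ over $\overline{\kappa(\wp')}$; hence the formal $A_\wp$-module attached to $E$ is nothing but that formal module, regarded as a formal $A_\wp$-module by restriction of scalars along $A_\wp\inj A'_{\wp'}$. The module $E'_{x'}$ has rank $1$, so its formal module has height $1$ as a formal $A'_{\wp'}$-module; indeed, as recalled in the previous subsection, it is the additive formal group on which $\pi'$ acts by $f_{\pi'}(x)=x^{q_{\wp'}}$, where $q_{\wp'}=|\kappa(\wp')|=|\kappa(\wp)|$ by the equality of residue fields. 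Writing $f_a$ for the action of $a\in A'_{\wp'}$ on this formal group, we obtain
\[
f_\pi(x)=f_u\bigl(f_{\pi'}^{\circ d}(x)\bigr)=f_u\bigl(x^{q_{\wp'}^{d}}\bigr);
\]
since $u$ is a unit, $f_u$ is an automorphism of the formal group, so $f_\pi(x)$ has lowest-degree term of degree $q_{\wp'}^{d}=q_\wp^{d}$. This says precisely that the formal $A_\wp$-module attached to $E$ has height $d$, and as $d$ is also the rank of $E$, the Drinfeld module $E$ is supersingular, as desired.

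The step I expect to require the most care is the second identification: that the formal module attached to the composite Drinfeld module $\varphi=\varphi_{x'}\circ h_A$ really is the restriction of scalars of the formal module attached to $E'_{x'}$, and that under a totally ramified extension of degree $d$ the height of a formal module gets multiplied by $d$. Both facts are routine once the definitions are unwound — the height statement can alternatively be read off the Tate module, $T_{\wp'}E'_{x'}\cong A'_{\wp'}$ being free of rank $d$ over $A_\wp$ — but this is exactly the point where one uses that $h$ has degree $d$ and is totally ramified at $\wp$; everything else is bookkeeping about the curve $C'$ already in place.
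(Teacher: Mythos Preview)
Your proof is correct and follows essentially the same approach as the paper: both exploit that the formal module of $E$ is the restriction of scalars of the height-$1$ formal $A'_{\wp'}$-module of $E'_{x'}$ along the totally ramified degree-$d$ extension $A_\wp\hookrightarrow A'_{\wp'}$. You read off the height directly from the lowest-degree term of $f_\pi(x)=f_u\bigl(x^{q_\wp^d}\bigr)$, whereas the paper equivalently compares the cardinality of $E[\wp]^{\mathrm{loc}}\cong\hat{E'}_{x'}[\pi'^d]$ with that of $E[\wp]$; these are two standard equivalent characterizations of height, and the underlying idea is identical.
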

\begin{proof}
A candidate $(E, \varphi)$ was constructed above.
It remains to show that $E$ is supersingular.
Let us take for our uniformizer $\pi'$ 
the generator $y$ of $F'$ over $F$.
Then $\pi=(\pi')^d=y^d$ 
is a uniformizer in  
$A_\wp^{ur}$.
We have
\[
E[\wp]^{loc} \cong 
\hat{E}[\wp]
=\hat{E}[\pi]
\cong
\hat{E'}_{x'}[\pi'^d].
\]
Here, the superscript $\hat{\ }$ denotes the associated formal module.
The isogeny that has the last term as the kernel 
is of degree $|\pi|_\infty^d$.
Since 
the isogeny that has $E[\wp]$ as the kernel 
is of degree $|\pi|_\infty^d$, 
it follows that 
$E[\wp]^{loc}=E[\wp]$.
This implies that $E$ is supersingular.
\end{proof}

\section{Proof of Main Theorem}
\label{sec:proof}
\subsection{}
Let the notation be as in Theorem~\ref{thm:main}.

\subsection{One prime at a time}
Let the setup be as in Theorem~\ref{thm:main}.
Let $N_1=N_{1, \wp_1}
\oplus \cdots \oplus
N_{1, \wp_r}$
be the primary decomposition of $N_1$.
Let 
$H=H_{\wp_1} \times 
\cdots \times H_{\wp_r}$
be the decomposition 
given by the definition of 
admissible subgroup.   (In particular,
$H_{\wp_i} \in \cS_{N_{1,\wp_i}}$
for $1 \le i \le r$.)
Since
\[
\cM_{N,U}^d/H
=
\cM^d_{N_{1,\wp_1} \oplus N_2, U}/H_{\wp_1}
\times_{\cM_{N_2, U}^d}
\dots
\times_{\cM_{N_2, U}^d}
\cM^d_{N_{1,\wp_r} \oplus N_2, U}/H_{\wp_r},
\]
it suffices to treat the case where 
$N_1$ is $\wp$-torsion 
for some nonzero prime ideal $\wp$.

We assume from now that 
$N_1=N_{1, \wp}$ 
and
$H=H_{\wp_1} \in S_{N_1}$.

\subsection{Away from the prime $\wp$}
Let $x$ be a closed point of $\cM_{N,U}^d/H$.   
Take a closed point $y$ 
of $\cM_{N, U}^d$ 
that is sent to $x$ 
via the canonical quotient map
$\cM_{N,U}^d \to \cM_{N,U}^d/H$.
Let $U_{N_1}= 
U \cap (\Spec A \setminus \Supp N_1)=
U \setminus \{\wp\}$.
Note that the restriction to $U_{N_1}$ 
(the base change from $U$ to $U_{N_1}$)
$\cM^d_{N, U_{N_1}} \to 
\cM^d_{N, U_{N_1}}/H$
of the canonical quotient map is \'etale.
Therefore the regularity at $x$ follows from
the regularity at $y$, which in turn follows from
Proposition~\ref{prop:level N moduli}.

\subsection{At the prime $\wp$; dependence on the height}
We follow the outline given in \cite{KM}.

We use Section \ref{sec:surjection}
with $N'_1=N_1$, $N_1''=0$,
and $f:N'_1 \to N_1''$ the 
zero map.
Then we obtain a morphism
$m_f:
\cM_{N_1 \oplus N_2, U}^d 
\to 
\cM_{N_2, U}^d$.
This morphism factors as
\[
\cM_{N_1 \oplus N_2, U}^d
\to 
\cM_{N_1 \oplus N_2, U}^d/H
\to
\cM_{N_2, U}^d.
\]
(Recall $N=N_1 \oplus N_2$.)
Let 
$x \in \cM_{N,U}^d/H(\overline{\kappa(\wp)})$,
$y \in \cM_{N, U}^d(\overline{\kappa(\wp)})$
be a preimage of $x$,
and 
$z \in \cM_{N_2, U}^d(\overline{\kappa(\wp)})$
be the image of $x$.

Let $\cO_z$ be the local ring 
of $\cM^d_{N_2, U}
\times_U
\Spec\, A_\wp^\mathrm{ur}$
at $z$
and $U_z$ be the completion of $\cO_z$.
By \cite[p.576 C]{Drinfeld},
the ring $U_z$ is isomorphic
to the deformation ring of the formal
$\cO$-module with level $N_2$ structure
associated with the Drinfeld module 
corresponding to the point $z$
(see Proposition~\ref{prop:divisible}
for a description of the 
corresponding divisible $\cO$-module
and hence of the formal $\cO$-module).
We note that $U_z$ depends only 
on the height of the associated formal
$\cO$-module.

Let us consider the following commutative 
diagram, where each of 
the squares is cartesian:
\[
\begin{CD}
\cM^d_{N,U}   
@<<<
\cM^d_{N,U}\times_{\cM^d_{N_2,U}} \Spec \cO_z
@<<<
\cM^d_{N,U} \times_{\cM^d_{N_2, U}} \Spec U_z
\\
@VVV @VVV @VVV
\\
\cM^d_{N,U}/H
@<<<
\cM^d_{N,U}/H \times_{\cM^d_{N_2,U}} \Spec \cO_z
@<<<
\cM^d_{N,U}/H \times_{\cM^d_{N_2, U}} \Spec U_z
\\
@VVV @VVV @VVV
\\
\cM^d_{N_2,U}   
@<<<
\Spec \cO_z
@<<<
\Spec U_z.
\end{CD}
\]
We note that the bottom horizontal arrows are 
flat, hence all horizontal arrows are flat.

The regularity of $\cM^d_{N, U}/H$
at the image of the morphism 
from $\Spec(\overline{\kappa(\wp)})$
corresponding
to the $\overline{\kappa(\wp)}$-valued 
point $x$
is equivalent to the 
regularity of 
$\cM_{N,U}^d/H \times_{\cM^d_{N_2,U}} \Spec U_z$
at the unique point over $x$ because the 
morphism $\Spec U_z \to \cM^d_{N_2, U}$
is regular.

It follows from 
\cite[p.217, Prop A7.1.3(1)]{KM}
that
\[
(\cM^d_{N,U}/H 
\times_{\cM^d_{N_2, U}}
\Spec U_z)
\cong
(\cM^d_{N,U} 
\times_{\cM^d_{N_2, U}}
\Spec U_z)^H.
\]
Notice that this scheme on the right
depends only on the height of the associated 
formal module corresponding to $z$.
Thus the regularity at $x$ depends
only on the height.
\begin{rmk}
We remark that the isomorphism above 
is an analogue of \cite[p.194 Remark 7.1.4]{KM}.
The idea for the argument above is also taken from loc.\ cit.
The main difference is that we do not use relative moduli
problems.
\end{rmk}

\subsection{Reduction to supersingular case}
The height $h$ of the 
formal $\cO$-module 
corresponding to the point $y$ 
ranges from $1$ to $d$. 
(Note that this height is the same as
that corresponding to the point $z$.)
We call the point $y$ supersingular if the 
height equals $d$.
Note that in terms of 
$f_\pi$ (see Section~\ref{sec:universal deformation}),
$h$ is the number such that 
$t_1=\dots=t_{h-1}=0$
and $t_h \neq0$.

Now since the supersingular locus is nonempty
by Proposition~\ref{prop:supersingular existence}
and closed, 
and since there exists 
a point of 
arbitrary height near a supersingular point,
it suffices to treat the case $h=d$.

\subsection{Reduction to the standard case}
We now consider the case where the point $y$ is supersingular.
The completion of the local ring at $y$ 
is isomorphic to $D_{N_1}$ (see \cite[p.576]{Drinfeld}).   
The task is to show that the ring of 
$H$-invariants
$(D_{N_1})^H$ is regular.
The aim of this subsection is to reduce to the case
where $N_1=(A/\wp^n)^d$ for some $n$.

\subsubsection{}
Let 
$P=(\wp^{-n}/A)^d$
and 
$Q=\wp^{-n_1}/A
\oplus \cdots \oplus
\wp^{-n_r}/A$
for some 
$1 \le n_1, \dots, n_r \le n$, 
$1 \le r \le d$.
There is an inclusion
$Q \subset P$ 
induced by the inclusions
$\wp^{-n_i}/A \subset 
\wp^{-n}/A \subset P$
where the first is the canonical one
and the second is into the 
$i$-th summand for $1 \le i \le r$.
Let us take an  $A$-module $L$,  
whose $\wp$-primary component is zero, 
and an open subscheme $U \subset \Spec \, A$,
so that the pairs 
$(Q\oplus L, U)$ and 
$(P\oplus L, U)$
satisfy one of the assumptions 
in Proposition~\ref{prop:level N moduli}.

Let us write 
$Q'=Q \oplus L$ and 
$P'=P \oplus L$.
We then obtain a morphism
$Q' \subset P'$ 
induced by the inclusion 
$Q \subset P$
and the identity on $L$,
and hence a morphism
$r: \cM_{P', U}^d
\to \cM_{Q', U}^d$
using Section~\ref{sec:injection}.

Set 
$G_{P', Q'}=\{
g\in \Aut_A(P')
\ |\ 
gQ'=Q',
g|_{Q'}= \id_{Q'}
\}.
$
Then the morphism $r$ 
above factors as
\[
\cM_{P',U}^d
\to
\cM_{P',U}^d/G_{P', Q'}
\xto{h}
\cM_{Q',U}^d.
\]
\begin{lem} \label{lem:h_isom}
$h$ is an isomorphism.
\end{lem}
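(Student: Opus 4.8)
The statement to prove is that $h\colon \cM_{N',U}^d/G \to \cM_{M',U}^d$ is an isomorphism, where $G = \{g \in \Aut_A(N') \mid gM' = M',\ g|_{M'} = \id_{M'}\}$ and the map is the descent of the forgetful-along-$M'\subset N'$ morphism $\cM_{N',U}^d \to \cM_{M',U}^d$. The plan is to exhibit an explicit inverse by constructing, from a Drinfeld module with level $M'$ structure, a canonical level $N'$ structure up to the $G$-action, and then to check that the two composites are the identity. Both source and target are representable affine schemes (by Proposition~\ref{prop:level N moduli}, given the hypotheses on $(M'\oplus L,U)$ and $(N'\oplus L,U)$), so it suffices to work functorially with the moduli problems.

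First I would analyze $G$ and the quotient $\cM_{N',U}^d/G$ as a moduli functor. Since $G$ fixes $M'$ pointwise and preserves it, $G$ acts on $N'/M'$; writing $N = (\wp^{-n}/A)^d$ and $M$ as in the excerpt, one has $N'/M' \cong N/M$, and $G$ is precisely the group of automorphisms of $N'$ that are the identity on $M'$ and which therefore can move a chosen splitting or, more precisely, act on the set of lifts $N' \to N'$ of the quotient data. The key algebraic fact is that, $\wp$ being the only prime in the support of $N/M$ and $M\subset N$ being the specified inclusion, a level $N'$ structure on a Drinfeld module $E/S$ is the same datum as a level $M'$ structure $\psi_{M'}$ together with an extension $\psi_N\colon N \to E[\wp^n]$ of $\psi_M$ compatible with the Cartier-divisor conditions; and the set of such extensions, when nonempty, is a torsor under a group on which $G$ acts simply transitively. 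So I would check: (i) given a level $M'$ structure, such an extension always exists étale-locally and its $S$-points form a single $G$-orbit; (ii) the map $\psi_N \mapsto \psi_N|_{M'}$ identifies the $G$-orbits of level $N'$ structures with level $M'$ structures. This gives a natural bijection on $T$-points of the functors for every $U$-scheme $T$, hence an isomorphism of functors, hence $h$ is an isomorphism.

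Concretely, I would prove (i) by reducing to the supersingular formal-module computation of Proposition~\ref{prop:formal}: the universal deformation ring $D_{N_1}$ was built inductively precisely by adjoining roots $\theta_i$ of $f_\pi(\theta_i) = \psi(e_i)$, which is exactly the step of extending a level structure by one generator. Reading that construction in reverse shows that adjoining the extra generators of $N$ over $M$ adjoins roots of polynomials already determined by the level $M'$ structure and the existing data, and that the Galois/automorphism group permuting these roots is exactly $G$; this is the analogue of \cite[7.5.1]{KM} at the level of deformation rings. The torsor statement then globalizes over $\cM_{M',U}^d$ because the relevant Cartier divisors are finite flat over the base.

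The main obstacle I expect is (i)'s \emph{existence} half: showing that a level $M'$ structure on an arbitrary (not just supersingular, not just formal) Drinfeld module over a general $U$-scheme $S$ always extends to a level $N'$ structure, i.e.\ that the relevant finite $S$-scheme of extensions is fppf-surjective onto $S$. Away from $\Supp N$ this is automatic by étaleness of the relevant isogeny kernels; at $\wp$ one must invoke the regularity and the explicit local description, patching the formal-module picture (where Proposition~\ref{prop:formal} gives it directly) with the étale picture via the decomposition of $E[\wp^n]$ into its connected-étale parts. Once existence and the simple transitivity of the $G$-action are in hand, the verification that $h$ and the constructed inverse compose to identities is formal. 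I would therefore organize the write-up as: (a) recall $\cM_{N',U}^d/G$ as the sheafification of $T \mapsto \cM_{N',U}^d(T)/G$; (b) construct the inverse functor on $T$-points; (c) check well-definedness and the two composites; (d) conclude by representability that $h$ is an isomorphism of schemes.
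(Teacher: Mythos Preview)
Your approach is genuinely different from the paper's and has a real gap. The paper's argument is a two-line normalization trick: $h$ is an isomorphism over $U\setminus\{\wp\}$ (there the kernels $E[\wp^n]$ are \'etale, so level-$N'$ structures extending a given level-$M'$ structure form a $G$-torsor in the na\"ive sense), both $\cM_{N',U}^d/G$ and $\cM_{M',U}^d$ are normal (the former as a finite-group quotient of a regular affine scheme, the latter by Proposition~\ref{prop:level N moduli}), and $h$ is finite; hence $h$ is an isomorphism by Zariski's main theorem/normalization.

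Your plan instead tries to exhibit a functorial inverse by showing that the set of level-$N'$ structures extending a given level-$M'$ structure is a $G$-torsor over every base. This fails at $\wp$. Over a supersingular $\overline{\kappa(\wp)}$-point the only level structure on the $\wp$-primary part is the zero map, so there is exactly one extension and $G$ fixes it; the action is not free. Scheme-theoretically, the forgetful map $\cM_{N',U}^d \to \cM_{M',U}^d$ is totally ramified along the supersingular locus, so the diagonal map $G \times \cM_{N',U}^d \to \cM_{N',U}^d \times_{\cM_{M',U}^d} \cM_{N',U}^d$ cannot be an isomorphism (the right side acquires nilpotents while the left side is reduced). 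Thus the map is not a $G$-torsor in any Grothendieck topology, and the quotient $\cM_{N',U}^d/G$ (Spec of invariants) does not represent the presheaf $T\mapsto \cM_{N',U}^d(T)/G$ or its sheafification in the way you need. Your step~(i), and with it the whole inverse construction, breaks down precisely at the interesting points. The fix is to abandon the torsor picture over $\wp$ and instead use the normality argument: once you know $h$ is an isomorphism on the dense open away from $\wp$ and both schemes are normal, finiteness of $h$ finishes it with no moduli-theoretic work at the bad fiber.
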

\begin{proof}
One can check directly that 
$h$ is an isomorphism 
away from $\wp$ (that is,
over $U\setminus \{\wp\}$).

Now, 
$\cM_{P', U}^d/G_{P', Q'}$ 
is normal, being a quotient of an affine regular scheme 
by a finite group action.
We also have that
$\cM_{Q',U}^d$ is regular hence normal.
As they agree away from $\wp$ and $h$ is finite,
being the normalizations, 
they agree over $U$.
\end{proof}

\subsubsection{}
Recall that $N_1$ is a $\wp$-torsion $A$-module
which is generated by at most $d$ elements.
Write $N_1=A_1 \oplus \cdots \oplus A_r$
where 
$A_i = A/\wp^{n_i}$ for $1 \le i \le r$
with some integer $n_i \ge 1$.
Let us choose an integer $n \ge \max_i n_i$.
Take an injection $A_i \inj A/\wp^n$ 
of $A$-modules for $i=1,\ldots,r$.
Set $\wt{N}_1=(A/\wp^n)^d$.
These injections give an injection 
$N \inj (A/\wp^n)^r
\inj (A/\wp^n)^d = \wt{N}_1$
where the second arrow is the injection
into the first $r$ factors.
For a subgroup 
$H \subset
\Aut_A(N_1)$, 
let $\wt{H}$ denote the subgroup
$\{g \in \Aut_A(\wt{N}_1)\ 
|\ g(N_1)=N_1, g|_{N_1} \in H\}$
of $\Aut_A(\wt{N}_1)$. 
As is easily seen, $H$ is an 
admissible subgroup of $\Aut_A(N_1)$ 
with respect to the direct sum decomposition as above 
if and only if
$\wt{H}$ 
is an admissible subgroup of $\Aut_A(\wt{N}_1)$.

From this and Lemma \ref{lem:h_isom}, we see 
that it suffices to treat the case
$N_1=(A/\wp^n)^d$.
\subsection{Some subgroups of $H$}
We now consider the case 
$N_1=(A/\wp^n)^d$
for some $n \ge 1$
in more detail.
We will need certain subgroups 
of an admissible subgroup $H$
for the proof of Theorem~\ref{thm:main}.
We label them below.

\subsubsection{}
To introduce subgroups of an 
admissible subgroup
$H\subset \Aut_A(N_1)$,
we introduce some more notation.
Let 
$e_i=\bar{1} \in 
A/\wp^n 
\subset (A/\wp^n)^d=N_1$
where the inclusion is 
into the $i$-th factor.
We regard 
$N_1=(A/\wp^n)^d$
as the set of 
row vectors,
on which
$\Aut_A(N_1)=
\GL_d(A/\wp^n)^\op$
acts as the multiplication from the right.

Let 
$R_{i,j} \subset A/\wp^n$
be subsets for
$1 \le i,j \le d$.
We use the notation
$\{(R_{i,j})\}$
to denote the subset
\[
\{
(r_{i,j})
\in M_d(A/\wp^n)
|
r_{i,j} \in
\delta_{i,j}
+R_{i,j}
\text{ for }
1\le i,j\le d
\}
\]
of the set 
$M_d(A/\wp^n)$ of $d$-by-$d$ matrices,
where $\delta_{i,j}$ 
is the Kronecker delta.

\subsubsection{}
Let
$H$
be an admissible subgroup
with respect to the standard direct sum
decomposition 
of $N_1$.
We assume that the subsets $R_1,\ldots,R_u$ of
$\{1,\ldots,d\}$ introduced in Section \ref{sec:523}
satisfy Condition (**). 
Then, by (*1),
there exists 
$(m_{i,j}) \in M_d(\Z)$
with 
$0 \le m_{i,j} \le n$
such that
\[
H=\{
(\wpbar^{m_{i,j}})
\}
\]
where $\wpbar=
\wp/\wp^n$
is the maximal ideal of $A/\wp^n$.
Let $K$ and $L_i$ ($i=1,\ldots,u$) be as in 
Section \ref{subsec:admissible} and set
$$
J = \Ker[H \surj K \to \prod_{1 \le i \le u} L_i].
$$
Let
$$
m'_{i,j} = \begin{cases}
1, & \text{ if }m_{i,j}=m_{j,i}=0, \\
m_{i,j} & \text{ otherwise}.
\end{cases}
$$
Then we have $J = \{(\wpbar^{m'_{i,j}})\}$.

Write $J_{i,j}=\wpbar^{m'_{i,j}}$.
Set 
$J_{i,j}^k=J_{i,j} \cap \wpbar^k$
for $0 \le k \le n$,
and 
$J^k=
\{
(J_{i,j}^k)
\}
$.
Set 
\[
J_{i,j}^{k,\ell}
=
\left\{
\begin{array}{ll}
J_{i,j}^k   
&
\text{if $i \le l$},
\\
J_{i,j}^{k+1}
&
\text{if $i > l$},
\end{array}
\right.
\]
for $0\le l \le d$ and 
$J^{k,\ell}=
\{(J_{i,j}^{k,\ell})\}$.
We have
\[
\begin{array}{ll}
J=J^0 = J^{0,d} \supset \cdots
\supset J^{0,0}
= J^1 = J^{1,d} \supset
\dots \supset
J^{1,0}
\\
=J^2=J^{2,d} \supset \dots
\supset J^{2,0 }
=J^3=\cdots.
\end{array}
\]
Note that 
\[
J^k=
\Ker[
J \subset
\GL_d(A/\wp^n)
\to
\GL_d(A/\wp^k)
]
\]
where the arrow is the canonical map,
and
\[
J^{k,\ell}
=
\{
h \in J^k
|
e_m h \equiv e_m \mod \wpbar^{k+1}
\text{ for }
\ell \le m \le d
\}.
\]

\subsubsection{}
It follows from this description that 
$J^{k+1}$
is a normal subgroup of 
$J^k$
for $k \ge 0$,
and the quotient
$J^k/J^{k+1}$
is abelian for $k \ge 1$.
Hence $J^{k,\ell-1}$
is a normal subgroup of 
$J^{k,\ell}$ for $k \ge 1, 1 \le l \le d$.
The situation is different for $k=0$. However
a similar statement also holds for $k=0$.
\begin{lem}
Let $1 \le \ell \le d$.
Then $J^{0,\ell-1}$
is a normal subgroup of $J^{0,\ell}$ and
the quotient $J^{0,\ell}/J^{0,\ell-1}$
is abelian.
\end{lem}

\begin{proof}
It suffices to prove that
$J^{0,\ell-1}/J^1$
is a normal subgroup of $J/J^1$ and
the quotient $(J^{0,\ell}/J^1)/(J^{0,\ell-1}/J^1)$
is abelian.

By definition we have $J^1 = \Ker[H \surj K]$.
Hence the surjection $H \surj K$ induces an isomorphism
$J/J^1 \cong \Ker[K \to \prod_{1 \le i \le u} L_i]$.
Via this isomorphism we regard $J/J^1$ as a subgroup of 
$K \subset \Aut_A(N_1[\wp]) = \GL_d(\kappa(\wp))^\op$.
In particular each $g \in J/J^1$ is an element of
$\GL_d(\kappa(\wp))$ and 
acts on $N_1[\wp] = (\wp^{n-1}/\wp^n)^d$, whose elements we regard
as row vectors, as the multiplication by $g$ from the right.
Note that the submodule 
$N_1[\wp]_{\ge \ell} = \bigoplus_{\ell \le i \le d}\wp^{n-1}/\wp^n$ 
of $N_1[\wp]$ is stable under the action of $J/J^1$.
It follows from the definition of $J^{0,\ell-1}$ that
$J^{0,\ell-1}/J^1$ is equal to the kernel of
$J/J^1 \to \Aut_A(N_1[\wp]_{\ge \ell})$.
This in particular shows that $J^{0,\ell-1}/J^1$ is a normal subgroup
of $J/J^1$.
Let us fix an element $e \in N_1[\wp]_{\ge \ell} \setminus
N_1[\wp]_{\ge \ell+1}$. Let us consider the map
$f: J^{0,\ell}/J^1 \to N_1[\wp]_{\ge \ell}$ that sends
$g \in J^{0,\ell}/J^1$ to $eg - e \in N_1[\wp]_{\ge \ell}$.
It is then easy to check that $f$ is a homomorphism of groups
and the kernel of $f$ is equal to $J^{0,\ell-1}/J^1$.
This shows that
the quotient $(J^{0,\ell}/J^1)/(J^{0,\ell-1}/J^1)$
is abelian and the proof is complete.
\end{proof}

\subsubsection{}
Set 
$Q^{k,\ell}
=J^{k,\ell}/
J^{k,\ell-1}$
for 
$k \ge 0, 1 \le l \le d$.
We take the set of representatives 
of $Q^{k,\ell}$
as follows.
Let us choose a uniformizer $\pi \in A_\wp$ and set
\[
Q_{i,j}^{k,\ell}
=
\left\{
\begin{array}{ll}
\{0\}
&
\text{if }
J_{i,j}^{k,\ell}
=J_{i,j}^{k,\ell-1},
\\
\{a \pi^k
|
a \in \kappa(\wp)
\}
&
\text{if }
J_{i,j}^{k,\ell}
\neq J_{i,j}^{k,\ell-1}.
\end{array}
\right.
\]
Then the set
$\{
(Q_{i,j}^{k,\ell})
\}$
is a subset of $J^{k,\ell}$
and is a complete set of representatives for $Q^{k,\ell}$.
Moreover, 
the image of $\{(Q_{i,j}^{k,\ell})\}$
under the homomorphism $J^{k,\ell} \to J^{k,\ell}/J^{k+1}$
is a subgroup of $J^{k,\ell+1}/J^{k+1}$.
%
%
%
%
%

\subsection{Some additive polynomials}
\subsubsection{}
We regard the $\wp$-torsion $A$-module $N_1$
as an $\cO$-module.   $A$-submodules
are regarded as $\cO$-submodules and
vice versa.

Recall (see Section \ref{sec:formal module})
that $D_{N_1}$ is the universal 
deformation ring of the formal $\cO$-module
$F_d \wh{\otimes}_{\kappa(\wp)} \overline{\kappa(\wp)}$ 
(see Definition \ref{def:deformation} and Proposition \ref{prop:LTDGH})
with maximal height equipped with a 
structure of level $N_1$.
The formal $\cO$-module is 
isomorphic as a formal group
to $\widehat{\mathbb{G}}_a$,
and we use $f$ as in 
Section \ref{sec:universal deformation}
to denote the $\cO$-structure.
The explicit description of $D_{N_1}$
is found in (the proof of)
Proposition \ref{prop:formal}.
We regard elements of $N_1$ as elements
of $\mathfrak{m}_{D_{N_1}} \subset D_{N_1}$
via the universal level structure
\[
\psi: N_1 \to \mathfrak{m}_{D_{N_1}}
\]
where $\mathfrak{m}_{D_{N_1}}$ 
is the maximal ideal of $D_{N_1}$.
(That is, we write $n_1$ 
to mean $\psi(n_1)$.)

We regard elements of $N_1$ as row vectors with coordinates in $A/\wp^n$.
Then the group $\GL_d(A/\wp^n)$ acts on $N_1$ from the right, where
the action of $g \in \GL_d(A/\wp^n)$ is given by the multiplication 
$-\cdot g : N_1 \to N_1$ by $g$ from the right.
Let $\cC$ be as in Section \ref{sec:formal module}.
Let $g \in \GL_d(A/\wp^n)$.
For a deformation $(F,f)$ over some $R \in \cC$ with
structure $\psi_F$ of level $N_1$, let 
$g \psi_F$ denote the composite $\psi_F \circ (-\cdot g)$.
Then $g \psi_F$ is another structure of level $N_1$.
By sending $(F,f,\psi_F)$ to $(F,f,g \psi_F)$
for each $(F,f,\psi_F)$, we obtain an automorphism
of the universal deformation ring $D_{N_1}$.
We denote this automorphism also by $-\cdot g$.
It is then straightforward to check that
the equality $\psi(x)\cdot g = \psi(x g)$
holds for any $x \in N_1$.

\subsubsection{}
Let us introduce some additive polynomials with 
coefficients in $D_{N_1}$ 
and list some of the properties.
Let 
$M \subset N_1=(A/\wp^n)^d$
be an $A$-submodule.
We set
\[
f^M(x)=
\prod_{\alpha \in M}
(x-\alpha)
\in D_{N_1}[x].
\]

As $M$ is an abelian group,
$f^M(x)$
is an additive polynomial,
that is $f^M(x+y)=f^M(x)+f^M(y)$
holds
where $x$ and $y$ are 
indeterminates.

Since $\kappa(\wp) \subset A_\wp=\cO$,
any $M$ as above is a $\kappa(\wp)$-vector
space.   Hence we have
$f^M(sx)=sf^M(x)$
for any $s \in \kappa(\wp)$.
%
%
It follows from the construction of $\wh{F}_d$ given in
Section \ref{sec:universal deformation} that
the action of $s \in \kappa(\wp) \subset \cO$
on $\fram_{D_{N_1}}$ as a formal $\cO$-module
is equal to the multiplication by $s$
in the $\overline{\kappa(\wp)}$-algebra $D_{N_1}$.
Hence we have
$f^M(s n)=sf^M(n)$
for any $s \in \kappa(\wp)$ and for any $n \in N_1$.

Let $y \in D_{N_1}$
and $g \in \GL_d(A/\wp^n)$.
Then we have
\[
(f^M(y)) \cdot g
=
f^{Mg}(yg)
=\prod_{\alpha \in Mg}
(yg - \alpha).
\]
%
%
%
%

\subsubsection{}
Now we look at the $\cO$-module structure
$f$ of the universal deformation 
$(D_{N_1}, f)$
(see Section~\ref{sec:universal deformation}).

For $z \in \kappa(\wp)$,
we have $f_z(x)=zx$.

Let $n \ge m \ge 0$.   
Then the power series (actually a polynomial in the case at hand)
\[
f_{\pi^m}(x) \in D_{N_1}[[x]]
\]
giving the multiplication-by-$\pi^m$ has 
as the set of roots the set of 
$\pi^m$-torsion points.
Thus we have
\[
f_{\pi^m}(x)=
f^M(x)
\]
where $M=N_1[\pi^m] \subset (A/\wp^n)^d$
is the set of $\pi^m$-torsion points.


As the universal level structure $\psi$ is
an $\cO$-module homomorphism, we have
$f_a(n_1)=an_1$
for $a\in \cO$ and $n_1 \in N_1$
(by abuse of notation, we write $n_1$
for $\psi(n_1)$).
Using the additivity, 
we have
$f_{c\pi^k}(n_1+n_1')=
f_{c\pi^k}(n_1)+f_{c\pi^k}(n_1')
=c\pi^k n_1+c\pi^k n_1'$
for $0 \le k \le n$,
$n_1, n_1' \in N_1$, and $c \in \kappa(\wp)$.

\subsection{A proposition of Katz and Mazur}
We recall the following proposition 
\cite[Prop.\ 7.5.2, p.205]{KM}.
The notations are those in \cite{KM}.
\begin{prop}[Prop. 7.5.2, p.205, \cite{KM}]
\label{prop:KM}
Let $A$ be a complete noetherian local ring
which is regular of dimension $n$
and whose residue field is perfect.
Let $G$ be a finite subgroup of 
$\Aut(A)$,
such that 
every $g \in G$
acts trivially on the 
residue field
of $A$.
Let $(x_1, \dots, x_{n-1}, y)$ be a regular system of 
parameters in $A$.
Assume that for each $g \in G$ we have
\[
\begin{array}{ll}
1.\ g(x_i)=x_i \text{ for } i=1, \dots, n-1,
\\
2.\ g(y)\equiv u y \mod (x_1, \dots, x_{n-1}) \text{ for some unit }
u\in A^\times.
\end{array}
\]
Then 
\begin{enumerate}
\item 
$A$ is free over $A^G$
with basis $1,y,y^2, \dots, (y)^{\#G-1}$.
\item
$A^G$ 
is a regular local ring of dimension $n$.
\item
A regular system of parameters for $A^G$ 
is 
$(x_1, \dots, x_{n-1}, N(y))$
where $N(y)$ is the norm
$\prod_{g \in G}g(y)$.
\end{enumerate}
\qed
\end{prop}

\begin{lem}\label{lem:rem_after_KM}
In the setting of Proposition \ref{prop:KM},
Assumption 2 follows from Assumption 1.
\end{lem}
\begin{proof}
Assumption 1 implies that the
action of $G$ on $A$ induces the action of $G$ on the quotient
ring $\overline{A} = A/(x_1,\ldots,x_{n-1})$. 
Since $x_1, \ldots, x_{n-1}, y$ form a regular system of parameters, 
$\overline{A}$ is a discrete valuation ring and
the image $\overline{y}$ of $y$ in $\overline{A}$ is
a uniformizer of $\overline{A}$.
Hence $g(\overline{y})$ is also a uniformizer 
and we have $g(\overline{y}) = u \overline{y}$ for some
unit $u$ of $\overline{A}$. Since any lift $\wt{u} \in A$
of $u$ is a unit in $A$, it follows that Assumption 2
is satisfied.
\end{proof}

\subsection{}

For $k \ge 0$ and $1\le i \le d$, we let
$J^k_i$ denote the $A$-submodule
$$
J^k_i = J^k_{i,1} \oplus \cdots \oplus J^k_{i,d}
$$
of $N_1$.
For $0\le \ell \le d$, we set
$$
J^{k,\ell}_i = J^{k,\ell}_{i,1} 
\oplus \cdots \oplus J^{k,\ell}_{i,d}.
$$

\begin{prop}
\label{prop:Hkl}
Let $k \ge 0$ and $0\le \ell \le d$.\\
1. The ring of invariants
$(D_{N_1})^{J^{k,\ell}}$
is regular.\\
2. $f^{J_i^{k,\ell}}(e_i)$ for $1 \le i \le d$
form a regular system of parameters in $(D_{N_1})^{J^{k,\ell}}$.
\end{prop}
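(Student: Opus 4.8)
The plan is to prove both statements simultaneously by descending induction on the pair $(k,l)$ along the filtration
\[
J^0 = J^{0,0} \supset \cdots \supset J^{0,d}= J^1 = J^{1,0} \supset \cdots,
\]
using Proposition~\ref{prop:KM} of Katz--Mazur at each step. The base case is reached when $J^{k,l}$ becomes trivial (equivalently, $k$ large): then $D_{N_1}^{J^{k,l}}=D_{N_1}$, which is regular by Proposition~\ref{prop:formal}, and $f^{J_i^{k,l}}(e_i) = f^{(A/\wp^n\, e_i)}(e_i)$ reduces (via the additive-polynomial identities of the previous subsection, taking $M$ appropriately) to the images of the $e_i$ under the universal level structure, which Proposition~\ref{prop:formal} tells us form a regular system of parameters. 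So the base case is just a reformulation of Proposition~\ref{prop:formal}.

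For the inductive step, suppose the statement holds for $J^{k,l+1}$ (or, across the boundary, for $J^{k+1,0}=J^{k,d}$). The key point is that $J^{k,l+1}$ is normal in $J^{k,l}$ for $k \ge 1$ (as recorded in the subsection ``Some subgroups of $H$''), with quotient $Q^{k,l}=J^{k,l}/J^{k,l+1}$ an elementary abelian $p$-group, and that $\{(Q_{i,j}^{k,l})\}$ is a complete set of representatives whose image in $J^{k,l}/J^{k+1}$ is a subgroup. One then applies Proposition~\ref{prop:KM} to the ring $R = D_{N_1}^{J^{k,l+1}}$, which by induction is regular with the explicit regular system of parameters $f^{J_i^{k,l+1}}(e_i)$, $1 \le i \le d$, together with the leftover $t_j$'s from $D_0$. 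The element $Q^{k,l}$ acts on $R$; because the group is generated by ``elementary'' matrices differing from the identity only in one off-diagonal block, its action moves exactly one of the parameters, say $f^{J_{i_0}^{k,l+1}}(e_{i_0})$, by the action on $e_{i_0}$, and fixes the others (up to the precise bookkeeping). The identity
\[
f^{J_{i_0}^{k,l}}(e_{i_0}) = \prod_{\alpha}\bigl(f^{J_{i_0}^{k,l+1}}(e_{i_0}) - \alpha\bigr),
\]
the product running over a set of representatives of $Q^{k,l}$ acting on $f^{J_{i_0}^{k,l+1}}(e_{i_0})$, exhibits $f^{J_{i_0}^{k,l}}(e_{i_0})$ as the ``norm'' parameter produced by Proposition~\ref{prop:KM}. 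Hence $R^{Q^{k,l}} = D_{N_1}^{J^{k,l}}$ is regular, and the system $\{f^{J_i^{k,l}}(e_i)\}_{1\le i\le d}$ (with the same $t_j$'s) is a regular system of parameters. This closes the induction.

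The main obstacle I anticipate is verifying the hypotheses of Proposition~\ref{prop:KM} precisely at each step: namely, that the action of $Q^{k,l}$ on the regular local ring $R = D_{N_1}^{J^{k,l+1}}$ has exactly the shape required (one parameter genuinely permuted by the additive group, the others fixed), and that the ``norm'' of the moved parameter is exactly $f^{J_{i_0}^{k,l}}(e_{i_0})$ rather than something only equal to it up to a unit or a change of parameters. This requires carefully tracking how $g \in \GL_d(A/\wp^n)$ acts via $(f^M(y))\cdot g = f^{Mg}(yg)$, combined with the computation of $f^M(e_i)$ for $M$ of the form $J_i^{k,l}$, and using the compatibility $J_{i,j}^{k,l} = J_{i,j}^{k+1}$ or $J_{i,j}^k$ according to whether $i \le l$ or $i > l$. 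The boundary case where one passes from level $(k,d)$ to level $(k+1,0)$ — where the filtration index $k$ increments and one must re-enter the situation $J^{k+1}$ normal in $J^k$ with abelian quotient for $k \ge 1$ — will also need a separate, slightly different application of Proposition~\ref{prop:KM}, and I expect the $k=0$ versus $k \ge 1$ distinction (abelianness of $J^k/J^{k+1}$ only for $k \ge 1$) to force the initial step to be handled directly rather than inductively.
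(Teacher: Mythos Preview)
Your outline matches the paper's proof exactly: descending induction along the chain $J^{k,l}$, base case from Proposition~\ref{prop:formal}, inductive step via Proposition~\ref{prop:KM}, with the moving parameter being the one whose index crosses the threshold $l$. The obstacle you anticipate is precisely the computation the paper carries out---modulo the ideal $I$ generated by the other parameters, each factor $(e_i-x)$ of $y$ and of $g(y)$ reduces to a unit times $e_i$ (using crucially that $m'_{i,i}\ge 1$, so $e_i-(0,\dots,x_i,\dots,0)=f_u(e_i)$ for a unit $u$), which verifies hypothesis~2; note also that in the supersingular case $r=d$ so there are no leftover $t_j$'s, and the paper treats all $k\ge 0$ uniformly rather than singling out $k=0$.
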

\begin{proof}
We prove this inductively.
The groups $J^{k,\ell}$ are
ordered by inclusion (both $k$ and $\ell$ run).
For $k$ large, $J^{k,\ell}=\{1\}$, so the claim
holds true by Proposition~\ref{prop:formal}.

Let $1 \le \ell \le d$.  (The case $\ell=0$
appears as the case $\ell=d$.) 
Suppose the claim holds true for 
$J^{k,\ell-1}$.  We prove the claim for $J^{k,\ell}$.

We use Proposition~\ref{prop:KM} in the following manner.
Recall that $J^{k,\ell-1}$
is a normal subgroup of $J^{k,\ell}$.
We set $G=J^{k,\ell}/J^{k, \ell-1}$
and $A=(D_{N_1})^{J^{k, \ell-1}}$
so that 
$A^G=(D_{N_1})^{J^{k,\ell}}$.
We let
$y=f^{J_\ell^{k,\ell-1}}$,
$n$ in Proposition~\ref{prop:KM} to be $d$, 
and
\[
(x_1, \dots, x_{d-1})
=
(
f^{J_1^{k, \ell-1}}(e_1), \stackrel{\ell}{\dots}, 
f^{J_d^{k, \ell-1}}(e_d)
)
\]
(equality not as ideals but as tuples)
where the $\ell$ means omission of the $\ell$-th component.

\begin{lem} \label{lem:Lemma1}
With notation as above,
Assumptions 1 and 2 of Proposition \ref{prop:KM} hold.
\end{lem}
\begin{proof}
By Lemma \ref{lem:rem_after_KM},
it suffices to prove that Assumption 1 holds. 
%

By the inductive hypothesis, the elements
$f^{J_i^{k,\ell-1}}(e_i) \in (D_{N_1})^{J^{k,\ell-1}}$
for $1 \le i \le d$ form a regular system of parameters.
Let $i$ be an integer with $1 \le i \le d$, $i \neq \ell$.
We show that $f^{J_i^{k,\ell-1}}(e_i)$ is 
fixed under the action of $J^{k,\ell}$. 

Take $g \in Q^{k,\ell}=J^{k,\ell}/J^{k,\ell-1}$.
Take the representative $\wt{g}$ of $g$
in $\{(Q_{i,j}^{k,\ell})\}$.
Since $i \neq \ell$, we have $e_i \wt{g} = e_i$
and $J_i^{k,\ell-1} = J_i^{k,\ell}$.
Hence we have $f^{J_i^{k,\ell-1}}(e_i)=f^{J_i^{k,\ell}}(e_i)$
and $f^{J_i^{k,\ell-1}}(e_i)g = f^{J_i^{k,\ell} \wt{g}}(e_i)$.
Hence it suffices to show $J_i^{k,\ell} \wt{g} = J_i^{k,\ell}$.

Let $x \in J_i^{k,\ell}$. Let $A_i(x)$ denote the $d$-by-$d$ matrix
whose $i$-th row is $x$ and the rest is zero.
Then we have $I_d + A_i(x) \in J_i^{k,\ell}$, where $I_d$ denotes
the $d$-by-$d$ identity matrix. Observe that the $i$-th row of
the product $(I_d + A_i(x))\wt{g} \in J_i^{k,\ell}$ is equal to
$e_i + x \wt{g}$. This shows $x \wt{g} \in J_i^{k,\ell}$.
Since $x \in J_i^{k,\ell}$ is arbitrary, we have 
$J_i^{k,\ell} \wt{g} \subset J_i^{k,\ell}$.
Since $J_i^{k,\ell}$ is finite, we obtain the equality
$J_i^{k,\ell} \wt{g} = J_i^{k,\ell}$, as desired.
\end{proof}

With Lemma \ref{lem:Lemma1} above, we apply Proposition \ref{prop:KM}
to our situation.   This completes the proof of Proposition.
\end{proof}

\subsection{}
Using Proposition~\ref{prop:Hkl} with $k=1$ and $\ell=0$,
we see that the ring of invariants 
$(D_{N_1})^{J}$ is regular and that
$f^{J_i}(e_i)$ for $1 \le i \le d$ form a regular system of parameters.
As $H/J \cong \prod_{j \in R}L_j$, it remains to take the 
invariants under the action of $\prod_{j \in R}L_j$.
%
%

The group $H/J = \prod_{j \in R}L_j$ acts on 
$(D_{N_1})^{J}$.   To use Dickson's theorem below, 
we check the following lemma.
\begin{lem}
The action of $\prod_{j \in R}L_j$ on the subset 
$f^{J_1}(e_1), \dots, f^{J_d}(e_d)$
is $\kappa(\wp)$-linear.
\end{lem}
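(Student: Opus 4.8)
The plan is to lift a given element $g \in L_s$ to an explicit block matrix $\tilde g \in H$ and to compute the effect of $\tilde g$ on each $f^{J_*}(e'_a)$ directly from the three identities already recorded for the polynomials $f^M$: the transformation rule $(f^M(y))\cdot h = f^{Mh}(yh)$, the additivity of $f^M$, and the scaling rule $z\,f^M(v) = f^M(zv)$ for $z \in \kappa(\wp)$. I work throughout with $s \in R$; this is the case needed in the sequel, and if $s \notin R$ the image of $K$ in $L_s$ is trivial by $(*3)$, so $L_s$ does not act and there is nothing to prove. Identify $L_s$ with $\GL_{d_s}(\kappa(\wp))$ by means of the basis of $F_s/F_{s-1}$ induced by $e'_1,\dots,e'_{d_s}$, and let $B = (B_{ab}) \in \GL_{d_s}(\kappa(\wp))$ be the matrix representing $g$, acting on row vectors from the right.

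First I would construct the lift. Using the inclusion $\kappa(\wp) \subset \cO$ and the surjection $\cO \surj A/\wp^n$, view $B$ as an element of $\GL_{d_s}(A/\wp^n)$, and let $\tilde g \in \GL_d(A/\wp^n) = \Aut_A(N_1)$ be the matrix that is $B$ in the $s$-th diagonal block and the identity elsewhere. Because $s \in R$, every pair $(i,j)$ with $i,j \in R_s$ lies in $S$, so $m_{i,j} = n_j - n_i = 0$ there; hence all the congruence conditions defining $H = \{(\wpbar^{m_{i,j}})\}$ are met by $\tilde g$ (off the $s$-th block its entries are $\delta_{i,j}$), so $\tilde g \in H$ and its image in $H/J \cong \prod_{i\in R}L_i$ is $g$. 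Since $\tilde g$ normalizes $J$, it preserves $(D_{N_1})^J$ and its action there is the action of $g$; moreover each $f^{J_*}(e'_a)$ lies in $(D_{N_1})^J$, being among the parameters exhibited in the corollary above. So it suffices to compute $f^{J_*}(e'_a)\cdot \tilde g$.

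Next I would verify $J_*\tilde g = J_*$. For $i \in R_s$ and any $j$ in the same block, $m_{i,j} = m_{j,i} = 0$, so $m'_{i,j} = 1$; hence the block-$s$ part of $J_* = J_i = \bigoplus_j \wpbar^{m'_{i,j}}$ is precisely $\wpbar\cdot (A/\wp^n)^{d_s}$, which right multiplication by $B$ carries to itself, while the remaining coordinates of $J_*$ are fixed by $\tilde g$. As $f^M$ depends only on $M$ as a subset, $f^{J_*\tilde g} = f^{J_*}$. Applying now the transformation rule (with $y = e'_a$, $h = \tilde g$), then additivity, then the scaling rule — the last step being legitimate precisely because $B_{ab}\in\kappa(\wp)$ — one obtains
\[
f^{J_*}(e'_a)\cdot \tilde g \;=\; f^{J_*}(e'_a\tilde g) \;=\; f^{J_*}\Bigl(\textstyle\sum_{b=1}^{d_s} B_{ab}\,e'_b\Bigr) \;=\; \sum_{b=1}^{d_s} B_{ab}\,f^{J_*}(e'_b).
\]
Therefore $L_s$ stabilizes the $\kappa(\wp)$-span of $f^{J_*}(e'_1),\dots,f^{J_*}(e'_{d_s})$ inside $(D_{N_1})^J$ and acts on it $\kappa(\wp)$-linearly; in fact this span is isomorphic as an $L_s$-module to $F_s/F_{s-1}$.

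The only point requiring a little care is that the scaling rule is available only for scalars in $\kappa(\wp)$ (it relies on $f_z(x) = zx$ for such $z$), whereas a priori a lift of $g$ has entries in $A/\wp^n$; this is why I choose $\tilde g$ to have entries in the copy of $\kappa(\wp)$ sitting inside $A/\wp^n$. Equivalently, one could take any lift and first reduce each entry modulo $\wpbar$, using that $f^{J_*}$ is invariant under translation by elements of $J_*$. Everything else is routine bookkeeping with the block decomposition.
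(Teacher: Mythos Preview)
Your argument is correct and follows the same route as the paper: apply the transformation rule $(f^{J_*}(e'_a))\cdot g = f^{J_* g}(e'_a g)$, use $J_* g = J_*$, then invoke additivity of $f^{J_*}$ and the $\kappa(\wp)$-scaling identity to obtain $\sum_b g_{ab} f^{J_*}(e'_b)$. You simply supply more detail than the paper does---in particular you make explicit the lift $\tilde g$ with entries in $\kappa(\wp)\subset A/\wp^n$, the reason it lies in $H$, and the verification that $J_*\tilde g=J_*$---points the paper leaves to the reader.
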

\begin{proof}
Take $g=(g_{c,d}) \in \prod_{j \in R}L_j$.
We have
\[
\begin{array}{ll}
f^{J_b}(e_b) \cdot g
=
f^{J_b g}(e_b g)
=
f^{J_b}(e_b g)
=
f^{J_b}(g_{b,1}e_1+\dots+
g_{b,d} e_d)
\\
=
f^{J_b}(g_{b,1}e_1)+\dots+
f^{J_b}(g_{b,d}e_d)
=
g_{b,1}f^{J_b}(e_1)
+\dots+
g_{b,d}f^{J_b}(e_d).
\end{array}
\]
This proves the claim.
\end{proof}

\subsection{}
We recall Dickson's theorem.
Let $V$ be a $d$ dimensional $\kappa(\wp)$-vector space.
Taking a basis 
$x_1, \dots, x_d$,
we identify the symmetric algebra
$\Sym_{\kappa(\wp)} V$
with the polynomial ring
$\kappa(\wp)[x_1, \dots, x_d]$.
Let $G=\GL(V)$.   
Let $\bF$ be 
a field containing $\kappa(\wp)$.
Then $G$ acts on
$\bF[x_1, \dots, x_d]=
\Sym_{\kappa(\wp)}(V) \otimes_{\kappa(\wp)} \bF$.
We will use the following theorem.
\begin{thm}[{\cite{Dickson}} (see {\cite[p.239, Thm 8.1.5]{Smith}})]
\label{thm:Hewett}
The ring of invariants 
$\bF[x_1, \dots, x_d]^G$
is a polynomial ring $\bF[f_1, \dots, f_d]$
for some explicitly given homogeneous polynomials 
$f_1, \dots, f_d$. \qed
\end{thm}

\begin{cor}
\label{cor:Dickson}
$\bF[[x_1, \dots, x_d]]^G
=\bF[[f_1, \dots, f_d]]$
\end{cor}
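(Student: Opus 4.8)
The plan is to exploit the fact that the Dickson invariants $f_1,\ldots,f_d$ are homogeneous and that the $G$-action respects the grading on $\bF[x_1,\ldots,x_d]$, so that the statement can be read off degree by degree.

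First I would record that $G=\GL(V)$ acts on $\bF[x_1,\ldots,x_d]$ by graded $\bF$-algebra automorphisms (indeed $\bF$-linearly on the degree-one part $V\otimes_{\kappa(\wp)}\bF$), hence preserves each homogeneous component $\bF[x_1,\ldots,x_d]_n$. The same action extends to $\bF[[x_1,\ldots,x_d]]=\prod_{n\ge 0}\bF[x_1,\ldots,x_d]_n$, still preserving each component. Consequently a power series $F=\sum_n F_n$, with $F_n$ its degree-$n$ homogeneous part, is $G$-invariant if and only if each $F_n$ is, i.e.
\[
\bF[[x_1,\ldots,x_d]]^G=\prod_{n\ge 0}\bigl(\bF[x_1,\ldots,x_d]_n\bigr)^G=\prod_{n\ge 0}\bigl(\bF[x_1,\ldots,x_d]^G\bigr)_n,
\]
the last equality because the invariant subring of a graded action is a graded subring (if $gF=F$ and each $gF_n$ is again homogeneous of degree $n$, then $gF_n=F_n$ for all $n$).

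Next I would invoke Theorem~\ref{thm:Dickson}: $\bF[x_1,\ldots,x_d]^G=\bF[f_1,\ldots,f_d]$ with the $f_i$ algebraically independent and homogeneous of positive degree. Therefore $\prod_{n}(\bF[f_1,\ldots,f_d])_n$ consists exactly of the (possibly infinite) $\bF$-linear combinations $\sum_\alpha c_\alpha f^\alpha$ of monomials $f^\alpha=f_1^{\alpha_1}\cdots f_d^{\alpha_d}$; since each $f_i$ has positive degree, only finitely many monomials $f^\alpha$ occur in each fixed total degree $n$, so such a sum is a well-defined element of $\bF[[x_1,\ldots,x_d]]$, and conversely every element of the product is of this form. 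This identifies $\prod_n(\bF[f_1,\ldots,f_d])_n$ with the formal power series ring $\bF[[f_1,\ldots,f_d]]$, which combined with the display above yields the claim.

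The only point needing care — and the step I expect to be the mild obstacle — is this last identification: one must use that the $f_i$ generate an honest power series ring (no algebraic relations spoil convergence, which is exactly the algebraic independence in Dickson's theorem) and that homogeneity together with $\deg f_i>0$ makes $\sum_\alpha c_\alpha f^\alpha$ converge $\frm$-adically for arbitrary coefficients $c_\alpha$. Everything else is formal bookkeeping with the grading, and no finiteness or coprimality hypothesis on $|G|$ is needed since the argument never invokes a Reynolds operator.
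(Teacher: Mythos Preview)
Your proposal is correct and follows essentially the same approach as the paper: both arguments rest on the observation that $G$ acts $\bF$-linearly and homogeneously, so invariance of a power series can be checked on each homogeneous component, at which point Dickson's theorem applies. The paper's proof is terser (three sentences, treating the inclusion $\bF[[f_1,\ldots,f_d]]\subset\bF[[x_1,\ldots,x_d]]^G$ as obvious), whereas you spell out the identification $\prod_n(\bF[f_1,\ldots,f_d])_n=\bF[[f_1,\ldots,f_d]]$ more carefully, but the underlying idea is identical.
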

\begin{proof}
The inclusion of the right hand side into
the left hand side is obvious.   

Let $y \in \bF[[x_1, \dots, x_d]]^G$
and write 
$y=\sum_{i \ge 0} y_i$
where each $y_i$ is a homogeneous polynomial
of degree $i$ in the $x$'s.
By definition, $\sigma \in G$ sends a homogeneous
polynomial to a homogeneous polynomial of the same degree.
Therefore, each $y_i$ belongs to 
$\bF[x_1, \dots, x_d]^G$, hence to $\bF[f_1, \dots, f_d]$.

Let $g=c \prod_{i=1}^d f_i^{a_i} \in \bF[f_1, \dots, f_d]$
be a monomial where $c \in \bF$
and $a_i \ge 0$ for each $1 \le i \le d$.
Then $g$ is homogeneous of degree
$\sum_{i=1}^d a_i \deg f_i$ in the $x$'s.
Hence $g$ appears as a summand of $y_m$
only if $m=\sum_{i=1}^d a_i \deg f_i$.
In particular, $g$ appears only finitely many times.
Therefore $\sum_{i \ge 0} y_i$ defines an element of 
$\bF[[f_1, \dots, f_d]]$.
This gives the other inclusion.
\end{proof}

\subsection{}
\begin{proof}[Proof of Theorem \ref{thm:main}]
We identify
$(D_{N_1})^{J}$
with
$\overline{\kappa(\wp)}
[[e_1'', \dots, e_d'']]$
where we write
$e_i''=f^{J}(e_i)$ 
for $1 \le i \le d$.
Corollary~\ref{cor:Dickson}
implies that the 
ring of invariants by $\prod_{j \in R}L_j$ 
is again a ring of formal power series, hence
regular.
\end{proof}

\end{document}